\def\ps@pprintTitle{%
 \let\@oddhead\@empty
 \let\@evenhead\@empty
 \def\@oddfoot{}%
 \let\@evenfoot\@oddfoot}
\pgfplotsset{compat=1.15}
\newtheorem{defn}{Definition}[section]
\newtheorem{theorem}{Theorem}[section]
\newtheorem{corollary}[theorem]{Corollary}
\newtheorem{proposition}[theorem]{Proposition}
\newenvironment{proof}{\smallskip\noindent{{\it Proof.}}\hskip \labelsep}%
           {\hfill\penalty10000\raisebox{-.09em}{\large\bf\rm $\blacksquare$}\par\medskip}
\newtheorem{lemma}[theorem]{Lemma}
\newtheorem{example}{Example}[section]
\begin{document}
\begin{frontmatter}

\title{Explicit multivariate approximations from cell-average data}\tnotetext[label1]{The first and third author have been supported through project 20928/PI/18 (Proyecto financiado por la Comunidad Aut\'onoma de la Regi\'on de Murcia a trav\'es de la convocatoria de Ayudas a proyectos para el desarrollo de investigaci\'on cient\'ifica y t\'ecnica por grupos competitivos, incluida en el Programa Regional de Fomento de la Investigaci\'on Cient\'ifica y T\'ecnica (Plan de Actuaci\'on 2018) de la Fundaci\'on S\'eneca-Agencia de Ciencia y Tecnolog\'ia de la Regi\'on de Murcia) and by the national research project PID2019-108336GB-I00. The fourth author
has been supported by grant MTM2017-83942 funded by Spanish MINECO and by grant PID2020-117211GB-I00 funded by MCIN/AEI/10.13039/501100011033.}

\author[UPCT]{Sergio Amat}
\ead{sergio.amat@upct.es}
\author[TEL]{David Levin}
\ead{levindd@gmail.com}
\author[UPCT]{Juan Ruiz}
\ead{juan.ruiz@upct.es}
\author[UV]{Dionisio F. Y\'a\~nez}
\ead{dionisio.yanez@uv.es}
\date{Received: date / Accepted: date}

\address[UPCT]{Departamento de Matemática Aplicada y Estadística. Universidad  Polit\'ecnica de Cartagena. Cartagena, Spain.}
\address[TEL]{School of Mathematical Sciences. Tel-Aviv University. Tel-Aviv, Israel.}
\address[UV]{Departamento de Matem\'aticas, Facultad de Matemáticas. Universidad de Valencia. Valencia, Spain.}

\begin{abstract}
Given gridded cell-average data of a smooth multivariate function, we present a constructive explicit procedure for generating a high-order global approximation of the function. One contribution is the derivation of high order approximations to point-values of the function directly from the cell-average data. The second contribution is the development of univariate B-spline based high order quasi-interpolation operators using cell-average data.
Multivariate spline quasi-interpolation approximation operators are obtained by tensor products of the univariate operators.
\end{abstract}

\begin{keyword}
Cell-average data \sep Explicit approximation \sep multivariate \sep Splines \sep Quasi-interpolation
\end{keyword}

\end{frontmatter}

\section{Introduction}

In some applications, e.g., in some numerical methods for solving flow problems, and in medical imaging, our raw data is in the form of cell-averages on a grid of cells. The underlying function is usually piecewise smooth, and the ultimate challenge is to find a high order piecewise smooth approximation to the function, including a high order approximation of the boundaries between the smooth pieces. This is the subject of several papers \cite{AD,SHU_cell}, including a recent paper by the authors \cite{IMAJNA}. A basic ingredient of these works is the simpler problem of approximating a smooth function from cell-average data,
and this is the subject of the present contribution.

Given the cell averages of a smooth function, we derive high order approximations of the values of the function at the middle point of each cell using the cell-average plus a linear combination of local even order central differences of the cell-average data. We start with the univariate case. Let $f$ be a $\mathcal{C}^{2m+2}$ function, and consider the cell-averages of $f$ on intervals of length $h$:
\begin{equation}\label{eq01}
\bar f(a)=\frac{1}{h}\int_{a-h/2}^{a+h/2}f(x)dx.
\end{equation}
Under these premises, we show that
\begin{equation}\label{eq02}
f(a)=\bar f(a) +\sum_{r=1}^ma_r \Delta^{2r}\bar f(a)+O(h^{2m+2}),
\end{equation}
where $\Delta^2g(a)=g(a-h)-2g(a)+g(a+h)$, and we develop a recursive algorithm for computing the coefficients $\{a_r\}$.
Next, we extend the above 1D formula to 2D and, by induction, to $k$D. We also show that the $k$D formula is just a tensor product of the 1D formula.

One way of computing a high order approximation of $f$ using its cell-averages is by finding a local polynomial whose cell-averages agree with the given cell-average data. This approach is used e.g. in \cite{ACDD}. Here we aim at getting a global approximation of $f$, and for this purpose we suggest using the quasi-interpolation strategy.

The term quasi-interpolation approximation describes a local high order approximation which is of the same approximation order as an interpolation operator using the same approximation function space. Spline quasi-interpolation operators are well studied, and explicit schemes are developed in \cite{speleers}. The $p$-order quasi-interpolation operators in \cite {speleers} compute the spline approximation of $f(x)$ using a small number of equidistant function values near $x$, such that the approximation is exact if $f\in\Pi^p(\mathbb{R})$, the space of polynomials of degree $\le p$. In the present paper we derive quasi-interpolation operators which use cell-average data. The new quasi-interpolation schemes are local and reproduce $\Pi^p(\mathbb{R})$, and we show that they achieve the full approximation order, i.e., $O(h^{p+1})$.

The derivation for the univariate case is based upon Speleers formulae \cite{speleers}  together with some basic properties of B-splines. The application to higher dimensions can be easily done by applying the tensor product of the univariate operators.

In practice, the cell-average data is given on a bounded rectangular domain. Thus, the algorithms for approximating the point-values of $f$ and for computing the spline approximation need to be changed near the boundaries of the domain. We suggest here a way of treating the approximation near the boundaries, maintaining the same asymptotic approximation error rates, only with larger multiplying constants.
This paper is divided in three sections: Firstly, we present a direct procedure in order to obtain point-wise values of the function from the cell averages, we show the error formula and extend the results to $k$D dimensions. In the second section, quasi-interpolation using B-splines is explained and a new general method to obtain a global approximation to the function is implemented. Also, some specific examples are presented and the leading error coefficient is derived. Finally, some conclusions and future work are commented in section 4.

\section{From cell-average to point value from a Lagrange interpolation point of view}\label{sec1}
Typically, in order to recover the original value of a function in a determined point from cell-average data we solve a system, i.e. if we consider the data $\{\bar{f}(a+jh)\}_{j=-m}^m$ and we want to approximate $f(a)$, we pose the system with unknowns $c_j, \,j=0,\hdots,2m+1$:
\begin{equation}\label{sistema}
\int_{a+jh-\frac{h}{2}}^{a+jh-\frac{h}{2}}\left(\sum_{i=0}^{2m+1} c_i x^i\right)dx=\bar{f}(a+jh), \quad j=-m,\hdots,m,
\end{equation}
define the polynomial $g\in\Pi^{2m+1}_1(\mathbb{R})$ as
$$g(x):=\sum_{i=0}^{2m+1} c_i x^i,$$
and calculate an approximation of $f(a)$ evaluating $g$ at $a$. It is well-known that, if $f\in \mathcal{C}^{2m+2}$, with this procedure we obtain an approximation of order $O(h^{2m+2})$. In this section, we develop a direct method  to obtain a general centered expression for a determined order of accuracy which relates the values of a function at a point with the cell-average around this point, Equation \eqref{eq01}.
The idea is to use the Taylor expansion of $f$ in Equation \eqref{eq01}, replacing the derivatives of $f$ by its appropriate finite differences, and then expressing the finite differences of $f$ by finite differences of $\bar f$. This leads to a triangular system for the coefficients in (2), which allows to design a recursive algorithm to get the solution.
This section is divided into two parts, we develop the method in the 1D case and provide an error formula, then we extend the result to the 2D case via the tensor-product strategy and finally, via an induction process, to $k$D.

\subsection{A procedure to obtain a centered expression for any order of accuracy}\label{generalcentro}
Along all this section, we will suppose a point $a\in \mathbb{R}$, assume $f\in \mathcal{C}^{2m+2}(\Omega)$, $\Omega=[x_1,x_2]$, $h>0$, and
$[a-m h,a+m h]\subset \Omega $. We denote,
\begin{equation}\label{qesdelta}
\Delta^{2m} \bar{f}(a)=\sum_{j=-m}^m (-1)^{m+j} \binom{2m}{j+m}\bar{f}(a+jh).
\end{equation}
To ease the notation, we express the calculations at the point $a=0$. Using Taylor expansions on Equation \eqref{eq01}, there exists $\xi_1 \in \Omega$ such that
\begin{equation}\label{eqd1}
\bar f(0)=\sum_{t=0}^m \frac{1}{(2t+1)!2^{2t}}h^{2t}f^{(2t)}(0)+\frac{1}{(2m+3)!2^{2m+2}}h^{2m+2}f^{(2m+2)}(\xi_1).
\end{equation}
We denote,
\begin{equation}\label{eqbb}
b_{m+1}=-\frac{1}{(2m+3)!2^{2m+2}}.
\end{equation}
Let $g\in\mathcal{C}^{2m+2}(\Omega)$ be a function with $[-rh,rh]\subset\Omega$ and $r\leq m$, using  Taylor expansions again we get:
\begin{equation}\label{eqd2}
g((j-r)h)+g(-(j-r)h)=2g(0)+2\sum_{k=1}^m \frac{(h(j-r))^{2k}}{(2k)!}g^{(2k)}(0)+O(h^{2m+2}),
\end{equation}
and
\begin{equation}\label{eqd3}
\Delta^{2r} g(0)=\binom{2r}{r}(-1)^r g(0)+\sum_{j=r+1}^{2r} (-1)^j\binom{2r}{j}\left(g((j-r)h)+g(-(j-r)h)\right).
\end{equation}
In order to obtain an easier expression for $\Delta^{2r} g(0)$ we  prove the following well-known auxiliary lemma.
\begin{lemma}\label{lema1}
Let $r$ be a natural number with $r\geq 1$ then
$$  \binom{2r}{r}(-1)^r+2\sum_{j=r+1}^{2r} (-1)^j\binom{2r}{j}=0.$$
\end{lemma}
\begin{proof}
From
\begin{equation*}
\binom{2r}{j}=\binom{2r}{2r-j}, \quad 0\leq j\leq 2r,
\end{equation*}
and the Newton binomial expansion:
$$w(x):=(x-1)^{2r}=\sum_{j=0}^{2r}\binom{2r}{j} x^{2r-j}(-1)^j,$$
we have that
$$0=w(1)=\sum_{j=0}^{2r}\binom{2r}{j} (-1)^j = \binom{2r}{r}(-1)^r+2\sum_{j=r+1}^{2r} (-1)^j\binom{2r}{j}.$$
\end{proof}
Thus, using \eqref{eqd2}, \eqref{eqd3} and Lemma \ref{lema1} we get:
\begin{equation}\label{eqd0}
\begin{split}
\Delta^{2r} g(0)=&\sum_{j=r+1}^{2r} (-1)^j\binom{2r}{j}\left(2\sum_{k=1}^m \frac{(h(j-r))^{2k}}{(2k)!}g^{(2k)}(0)+O(h^{2m+2})\right) \\
=& 2\sum_{k=1}^m\left(\sum_{j=r+1}^{2r} (-1)^j\binom{2r}{j}(j-r)^{2k}\right)\frac{h^{2k}}{(2k)!}g^{(2k)}(0)+O(h^{2m+2}),
\end{split}
\end{equation}
and prove the following direct consequence, Corollary \ref{cor2}.
\begin{corollary}\label{cor2}
Let $r,k$ be natural numbers with $1\leq k \leq r$ then
$$\sum_{j=r+1}^{2r} (-1)^j\binom{2r}{j}(j-r)^{2k}=\begin{cases}
0, & 1\leq k < r,\\[2pt]
\frac{(2r)!}{2} & k=r.
\end{cases}
$$
\end{corollary}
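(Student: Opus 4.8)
The plan is to recognize the given partial sum as exactly one half of a complete, symmetric binomial sum, and then to evaluate that complete sum as a finite difference of a monomial. Throughout, write $T_k := \sum_{j=r+1}^{2r}(-1)^j\binom{2r}{j}(j-r)^{2k}$ for the quantity in the statement.

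First I would introduce the full symmetric sum
\begin{equation*}
S_k := \sum_{j=0}^{2r}(-1)^j\binom{2r}{j}(j-r)^{2k}
\end{equation*}
and observe that its summand is invariant under the reflection $j\mapsto 2r-j$: indeed $\binom{2r}{2r-j}=\binom{2r}{j}$, $(-1)^{2r-j}=(-1)^j$, and $\big((2r-j)-r\big)^{2k}=(r-j)^{2k}=(j-r)^{2k}$. Hence the block of terms with $j<r$ reproduces the block with $j>r$, while the central term $j=r$ vanishes because $0^{2k}=0$ for $k\ge 1$. Consequently $S_k=2T_k$, and it suffices to show $S_k=0$ for $1\le k<r$ and $S_r=(2r)!$.

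Next I would read $S_k$ as a forward finite difference. Setting $p(x):=(x-r)^{2k}$, a polynomial of degree $2k$ with leading coefficient $1$, the standard identity $\sum_{j=0}^{n}(-1)^j\binom{n}{j}p(j)=(-1)^n\,\Delta^n p(0)$ (with $\Delta$ the unit-step forward difference) gives $S_k=(-1)^{2r}\Delta^{2r}p(0)=\Delta^{2r}p(0)$. The key fact, and the only real content of the argument, is that the $n$-th finite difference annihilates every polynomial of degree strictly less than $n$ and returns $n!$ times the leading coefficient when the degree equals $n$. Applying this with $n=2r$ and $\deg p=2k$ yields $S_k=0$ whenever $2k<2r$ (i.e. $k<r$) and $S_r=(2r)!$ when $2k=2r$ (i.e. $k=r$), so $T_k=S_k/2$ gives precisely the two cases claimed.

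Finally, I would note that the corollary can equally be read off from \eqref{eqd0}, justifying the label ``direct consequence'': substituting the monomial $g(x)=x^{2k}$ is legitimate since $1\le k\le m$, so only the $s=k$ term of the inner sum survives with $g^{(2k)}(0)=(2k)!$, reducing \eqref{eqd0} to $\Delta^{2r}x^{2k}(0)=2T_k\,h^{2k}+O(h^{2m+2})$. Comparing this with the direct evaluation $\Delta^{2r}x^{2k}(0)=0$ for $k<r$ and $\Delta^{2r}x^{2r}(0)=(2r)!\,h^{2r}$, then dividing by $h^{2k}$ and letting $h\to 0$, recovers the same result. I expect the index bookkeeping and the symmetrization to be routine; the one step that carries all the weight is the evaluation of the leading and vanishing finite-difference coefficients of a monomial under a $2r$-th order difference.
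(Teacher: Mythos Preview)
Your proof is correct. The symmetrization step $S_k=2T_k$ is sound (the reflection $j\mapsto 2r-j$ preserves every factor, and the central term vanishes because $k\ge 1$), and the evaluation of $S_k$ as the $2r$-th forward difference of the monic polynomial $(x-r)^{2k}$ is exactly the standard finite-difference fact you cite.

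Your route is, however, genuinely different from the paper's. The paper does not symmetrize or compute the sum directly; instead it picks a single test function $g(x)=(x+1)^{2r}$, a degree-$2r$ polynomial with $g^{(2k)}(0)\ne 0$ for every $1\le k\le r$, and compares the expansion \eqref{eqd0} (written with $m=r$ so that the remainder vanishes) against the exact identity $\Delta^{2r}g(0)=h^{2r}g^{(2r)}(0)$. Since both sides are then polynomials in $h$, matching coefficients of $h^{2k}$ for $k=1,\dots,r$ gives all the values $T_k$ at once. Your alternative final paragraph is a per-$k$ version of this idea with $g(x)=x^{2k}$. What your main argument buys is independence from \eqref{eqd0}: it is a self-contained combinatorial computation that would stand even outside the Taylor-expansion framework. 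What the paper's argument buys is economy inside its own narrative---one test function, one comparison, and the result drops out of machinery already in place.
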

\begin{proof}
Let $r\geq1$ be a natural number. We take a function, $g\in\mathcal{C}^{2r+1}(\mathbb{R})$ such that $g^{(2k)}(0)\neq 0, \,\, 1\leq k \leq r$ and $g^{(2r+1)}(0)=0$ (for example $g(x)=(x+1)^{2r}$), then we define the polynomial of degree $2r$:
$$\Delta^{2r}g(0)=h^{2r} g^{(2r)}(0)=:p(h),$$
and by Equation \eqref{eqd0} we obtain the following polynomial:
$$\Delta^{2r}g(0)=2\sum_{k=1}^r\left(\sum_{j=r+1}^{2r} (-1)^j\binom{2r}{j}(j-r)^{2k}\right)\frac{h^{2k}}{(2k)!}g^{(2k)}(0)=:q(h),$$
from $q(h)=p(h)$ we get the result.
\end{proof}

Using  Corollary \ref{cor2}, we rewrite Equation \eqref{eqd0} as:
\begin{equation}\label{eqd4}
\Delta^{2r} g(0)= 2\sum_{k=r}^m\left(\sum_{j=r+1}^{2r} (-1)^j\binom{2r}{j}(j-r)^{2k}\right)\frac{h^{2k}}{(2k)!}g^{(2k)}(0)+O(h^{2m+2}).
\end{equation}
Finally, if we substitute $g=f^{(2t)}$, with $t\leq m$ in Equation \eqref{eqd4} then for each $t$
\begin{equation}\label{eqd5}
\Delta^{2r} f^{(2t)}(0)= 2\sum_{k=r}^{m-t}\left(\sum_{j=r+1}^{2r} (-1)^j\binom{2r}{j}(j-r)^{2k}\right)\frac{h^{2k}}{(2k)!}f^{(2(k+t))}(0)+O(h^{2m+2}).
\end{equation}
Applying $\Delta^{2r}$ to Equation \eqref{eqd1},
\begin{equation*}
\begin{split}
\Delta^{2r} \bar f(0)=&\sum_{t=0}^m \frac{1}{(2t+1)!2^{2t}}h^{2t}f^{(2t)}(0)+b_{m+1}h^{2m+2}f^{(2m+2)}(\xi_1),\\
=&\sum_{t=0}^m \frac{h^{2t}}{(2t+1)!2^{2t}}\left(2\sum_{k=r}^{m-t}\left(\sum_{j=r+1}^{2r} (-1)^j\binom{2r}{j}(j-r)^{2k}\right)\frac{h^{2k}}{(2k)!}f^{(2(k+t))}(0)+O(h^{2m+2})\right)\\
&-b_{m+1}h^{2m+2}f^{(2m+2)}(\xi_1)\\
=&\sum_{t=0}^m \left(\sum_{k=r}^{m-t}\left(\sum_{j=r+1}^{2r} (-1)^j\binom{2r}{j}(j-r)^{2k}\right)\frac{h^{2(k+t)}}{(2t+1)!(2k)!2^{2t-1}}f^{(2(k+t))}(0)\right)+O(h^{2m+2}).\\
\end{split}
\end{equation*}
Defining $i=k+t$ for $r\leq k\leq m-t$ and $0\leq t$, we get
\begin{equation}\label{eqd6}
\begin{split}
\Delta^{2r} \bar f(0)=&\sum_{t=0}^m \left(\sum_{k=r}^{m-t}\left(\sum_{j=r+1}^{2r} (-1)^j\binom{2r}{j}(j-r)^{2k}\right)\frac{h^{2(k+t)}}{(2t+1)!(2k)!2^{2t-1}}f^{(2(k+t))}(0)\right)+O(h^{2m+2})\\
=&\sum_{i=r}^m\left( \sum_{t=0}^{i-r}\left(\frac{1}{2^{2t-1}(2t+1)!(2(i-t))!}\sum_{j=r+1}^{2r} (-1)^j\binom{2r}{j}(j-r)^{2(i-t)}\right)h^{2i}f^{(2i)}(0)\right)+O(h^{2m+2}).\\
\end{split}
\end{equation}
We introduce the notation
\begin{equation}\label{eqd7}
\Delta^{2r}_{m} \bar f(0)=\sum_{i=r}^{m}\left( \sum_{t=0}^{i-r}\left(\frac{1}{2^{2t-1}(2t+1)!(2(i-t))!}\sum_{j=r+1}^{2r} (-1)^j\binom{2r}{j}(j-r)^{2(i-t)}\right)h^{2i}f^{(2i)}(0)\right).
\end{equation}
From \eqref{eqd1}, we have that
\begin{equation*}
\bar f(0)=f(0)+\sum_{t=1}^{m} \frac{h^{2t}}{(2t+1)!2^{2t}}f^{(2t)}(0)+O(h^{2m+2}).
\end{equation*}
Now we look for coefficients $\mathbf{a}_{m}=(a_1,\hdots,a_{m})$ such that
\begin{equation}\label{levink}
\sum_{t=1}^{m} \frac{h^{2t}}{(2t+1)!2^{2t}}f^{(2t)}(0)+\sum_{r=1}^{m} a_r \Delta^{2r}_{m} \bar f(0)=0.
\end{equation}
In view of the expression \eqref{eqd7} we obtain:
\begin{equation}\label{sys}
\mathbf{M}_{m} \mathbf{a}_{m}^T = \mathbf{b}_{m}^T,
\end{equation}
where 
$\mathbf{b}_{m}=(b_1,\hdots,b_{m})$ are the coefficients of the derivatives which appear in the expression of $\bar f(0)$, Equation \eqref{eqbb}, i.e.
\begin{equation}\label{valoresb}
b_i=-\frac{1}{(2i+1)!2^{2i}}, \quad 1\leq i \leq m,
\end{equation}
and $\mathbf{M}_{m}=(m_{i,j})_{i,j=1}^{m}$ are the coefficients of the derivatives of $\Delta^{2r}_{m}$, Equation \eqref{eqd7}, with
\begin{equation}\label{valoresmatriz}
m_{i,j}= \sum_{t=0}^{i-j}\frac{1}{2^{2t-1}(2t+1)!(2(i-t))!}\sum_{s=j+1}^{2j} (-1)^s\binom{2j}{s}(s-j)^{2(i-t)}.
\end{equation}

The following result is a direct consequence by Corollary \ref{cor2}. This result confirms that there exists a unique solution to our problem. Combining \eqref{eqd1} and \eqref{levink}, it follows that we can approximate the value of the function with a certain order of accuracy using a combination of cell-average data.
\begin{proposition}
Let $m$ be a natural number and $\mathbf{M}_{m}=(m_{i,j})_{i,j=1}^{m}$ the matrix defined in Equation \eqref{valoresmatriz}
then
$$m_{i,j}=\begin{cases} 0, & i<j, \\
1, & i=j.
\end{cases}$$
Therefore, the system defined in Equation \eqref{sys} has a unique solution.
\end{proposition}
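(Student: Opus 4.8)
The plan is to read off the two claimed values directly from the definition of $m_{i,j}$ in \eqref{valoresmatriz}, handling the strictly-lower-triangular and the diagonal cases separately, and then to deduce invertibility from the resulting triangular structure. The argument should be short, because Corollary \ref{cor2} already supplies the single nontrivial evaluation that is needed.

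First I would dispose of the case $i<j$. Here the outer index $t$ in \eqref{valoresmatriz} runs from $0$ up to $i-j$, and since $i-j<0$ this is an empty sum; hence $m_{i,j}=0$ with no computation at all, once we adopt the usual convention that a sum whose upper limit is below its lower limit is zero.

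Next I would treat the diagonal $i=j$. Now the outer sum collapses to its single term $t=0$, so that
\begin{equation*}
m_{i,i}=\frac{1}{2^{-1}\,1!\,(2i)!}\sum_{s=i+1}^{2i}(-1)^s\binom{2i}{s}(s-i)^{2i}=\frac{2}{(2i)!}\sum_{s=i+1}^{2i}(-1)^s\binom{2i}{s}(s-i)^{2i}.
\end{equation*}
The inner sum is exactly the expression appearing in Corollary \ref{cor2} with the correspondence $r=j=i$ and $k=i-t=i$, i.e. the case $k=r$, whence it equals $(2i)!/2$. Multiplying by the prefactor $2/(2i)!$ yields $m_{i,i}=1$, as claimed. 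The only point requiring care, and where I expect the only real (if minor) obstacle to lie, is the index bookkeeping: one must verify that the exponent $2(i-t)$ and the parameter $r=j$ align so that at $t=0$ one lands precisely in the $k=r$ branch of the corollary rather than in the $k<r$ branch.

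Finally, combining the two cases shows that $\mathbf{M}_m$ is lower triangular with every diagonal entry equal to $1$, so $\det \mathbf{M}_m=1\neq 0$. Therefore $\mathbf{M}_m$ is invertible and the linear system \eqref{sys} possesses a unique solution $\mathbf{a}_m$, which is exactly what is required for the expansion \eqref{eq02} to be well defined and for the recursive computation of the coefficients $\{a_r\}$ to proceed.
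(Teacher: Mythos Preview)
Your proof is correct and follows exactly the route the paper indicates: the paper states that the proposition is ``a direct consequence by Corollary \ref{cor2}'' and gives no further argument, and you have simply spelled out that consequence. The vanishing for $i<j$ via the empty-sum convention and the evaluation $m_{i,i}=1$ by invoking the $k=r$ case of Corollary \ref{cor2} (with $r=j=i$, $k=i-t=i$) are precisely what is intended; the triangular-determinant conclusion is then immediate.
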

In order to clarify all the concepts, we introduce an example.
\begin{example}
For example, for $m=5$, we get:
\begin{equation}\label{variablessystem}
\bordermatrix{
 ~ & {\Delta_5^2\bar{f}(a)}  & {\Delta_5^4\bar{f}(a)} & {\Delta_5^6\bar{f}(a)} & {\Delta_5^8\bar{f}(a)} & {\Delta^{10}_5\bar{f}(a)} \cr
h^2f^{(2)}(a)&  1 & 0 & 0 & 0 & 0 \cr
h^4f^{(4)}(a)& \frac{1}{8} & 1 & 0 & 0 & 0 \cr
h^6f^{(6)}(a)& \frac{13}{1920} & \frac{5}{24} & 1 & 0 & 0 \cr
h^8f^{(8)}(a)& \frac{41}{193536} & \frac{23}{1152} & \frac{7}{24} & 1 & 0 \cr
h^{10}f^{(10)}(a)& \frac{671}{154828800} & \frac{227}{193536} & \frac{77}{1920} & \frac{3}{8} & 1 \cr
}=\mathbf{M}_5,
\end{equation}
\begin{equation}\label{variablessystem2}
\bordermatrix{
 ~ & {f(a)-\bar{f}(a)}\cr
h^2f^{(2)}(a)& -(24)^{-1}\cr
h^4f^{(4)}(a)& -(1920)^{-1}\cr
h^6f^{(6)}(a)& -(322560)^{-1} \cr
h^8f^{(8)}(a)& -(92897280)^{-1}\cr
h^{10}f^{(10)}(a)& -(40874803200)^{-1}
}=\mathbf{b}_5^T.
\end{equation}
It is clear that there exists a unique solution and it is:
\begin{equation*}
 \mathbf{a}_5=\left(-\frac{1}{24},\frac{3}{640},-\frac{5}{7168},\frac{35}{294912},-\frac{63}{2883584}\right).
\end{equation*}
Then,
\begin{equation*}
\bar f(a)-\frac{1}{24}\Delta^2\bar f(a)+\frac{3}{640}\Delta^4\bar f(a)-\frac{5}{7168}\Delta^6\bar f(a)+\frac{35}{294912}\Delta^8\bar f(a)-\frac{63}{2883584}\Delta^{10}\bar f(a)=f(a)+O(h^{12}).
\end{equation*}
\end{example}
If we know the coefficients $a_1, \hdots,a_{m-1}$ we can write that
$$ a_{m}=b_{m}-\sum_{r=1}^{m-1} m_{m,r}a_r,$$
then we can design a recursive algorithm to solve the problem.
Collecting all the above results, we have proved that if $f\in \mathcal{C}^{2m+2}(\Omega)$ then
\begin{equation}\label{importante}
f(a)=\sum_{r=0}^{m} a_r \Delta^{2r} \bar f(a)+O(h^{2m+2}),
\end{equation}
being $a_0=1$ and $\mathbf{a}_{m}=(a_1,\hdots,a_{m})$ the solution of the system in \eqref{sys}. We present an error formula for this approximation in the following proposition.

\begin{proposition}[Error formula]\label{corolarioerror}
Let $\Omega$ be a closed interval, $1\leq m\in\mathbb{N}$; $a,h\in\mathbb{R}$, $h>0$ with $[a-mh,a+mh]\subset \Omega$, $f\in\mathcal{C}^{2m+2}(\Omega)$ and $||f^{(2m+2)}||_{\infty,\Omega}=\max\{|f^{(2m+2)}(x)|:x\in\Omega\}$, then
\begin{equation}\label{erroreq}
\left|f(a)-\sum_{r=0}^{m} a_r \Delta^{2r} \bar f(a)\right|\leq |a_{m+1}|||f^{(2m+2)}||_{\infty,\Omega}h^{2m+2},
\end{equation}
being $a_0=1$ and $\mathbf{a}_{m}=(a_i)_{i=1}^{m}$ the solution of the system \eqref{sys}, and
$$a_{m+1}=b_{m+1}-\sum_{r=1}^{m} m_{m+1,r}a_r,$$
with $b_{m+1}$ and $m_{m+1,j}$, $j=1,\hdots,m$ defined in Equations \eqref{valoresb} and \eqref{valoresmatriz} respectively.
\end{proposition}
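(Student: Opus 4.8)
The plan is to make explicit the remainder in the asymptotic identity \eqref{importante} by repeating the derivation that leads to \eqref{eqd6}, but this time retaining the order-$h^{2m+2}$ term instead of absorbing it into $O(h^{2m+2})$. Working at $a=0$ as in Section \ref{generalcentro}, I would study the residual functional $R[f]=f(0)-\sum_{r=0}^m a_r\Delta^{2r}\bar f(0)$, which is linear in $f$ and, by the construction of the $\{a_r\}$, vanishes on every $f\in\Pi^{2m+1}(\mathbb{R})$.

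First I would observe that, since $f\in\mathcal{C}^{2m+2}(\Omega)$, every Taylor expansion used in the derivation is exact once carried to order $2m+2$ with a Lagrange remainder; in particular \eqref{eqd1} is already written in this form, with the single remainder value $f^{(2m+2)}(\xi_1)$ and coefficient $-b_{m+1}$. Carrying the same bookkeeping through \eqref{eqd2}--\eqref{eqd6}, the expansion of each $\Delta^{2r}\bar f(0)$ acquires, beyond the derivatives $f^{(2t)}(0)$ with $t\le m$, a term of order $h^{2m+2}$ whose coefficient is exactly the entry obtained by evaluating \eqref{valoresmatriz} at $i=m+1$, namely $m_{m+1,r}$.

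Next I would invoke that $\mathbf{a}_m=(a_1,\dots,a_m)$ solves \eqref{sys}, i.e. it annihilates the contribution of $f^{(2t)}(0)$ for every $t=1,\dots,m$ in $R[f]$ (the constant and odd-order terms drop out automatically, by $a_0=1$ and by the symmetry of the centered stencils). Hence the only surviving contribution in $R[f]$ is the collected order-$h^{2m+2}$ term, whose total coefficient is $b_{m+1}-\sum_{r=1}^m m_{m+1,r}a_r$, which is precisely the definition of $a_{m+1}$ given in the statement. This coefficient identity is the conceptual heart of the proof: $a_{m+1}$ is exactly the value that the natural $(m+1)$-st row of the triangular system \eqref{sys} would need in order to cancel the next derivative.

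Finally, to pass from this identity to the stated inequality I would bound the $f^{(2m+2)}$ evaluations appearing in the remainder by $\|f^{(2m+2)}\|_{\infty,\Omega}$, which is finite because $f^{(2m+2)}$ is continuous on the compact $\Omega$ and all sample points $a+jh$ lie in $[a-mh,a+mh]\subset\Omega$. The \emph{delicate point}, which I expect to be the main obstacle, is that the Lagrange remainders are generated at several distinct intermediate points, so bounding each term separately would yield the sum of the absolute values of the coefficients rather than the single constant $|a_{m+1}|$. To recover the sharp constant one wants to write $R[f]=a_{m+1}h^{2m+2}f^{(2m+2)}(\xi)$ for a single $\xi\in\Omega$; this follows from a mean-value (Peano-kernel) argument for the $\Pi^{2m+1}$-annihilating functional $R$, provided the associated kernel does not change sign, after which $|R[f]|\le |a_{m+1}|\,\|f^{(2m+2)}\|_{\infty,\Omega}\,h^{2m+2}$ is immediate.
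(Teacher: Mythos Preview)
Your outline follows the paper's proof closely: expand $\bar f(0)$ and each $\Delta^{2r}\bar f(0)$ via Taylor with a Lagrange remainder of order $h^{2m+2}$, use the defining system \eqref{sys} to annihilate the $f^{(2t)}(0)$ contributions for $1\le t\le m$, and recognise the surviving coefficient as $a_{m+1}=b_{m+1}-\sum_{r=1}^m m_{m+1,r}a_r$. The paper does exactly this, writing the remainder of $\bar f(0)$ as $-b_{m+1}f^{(2m+2)}(\xi_1)h^{2m+2}$ and that of $\Delta^{2r}\bar f(0)$ as $m_{m+1,r}f^{(2m+2)}(\xi_2)h^{2m+2}$, and then combining.

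The ``delicate point'' you flag is real, and the paper does \emph{not} resolve it via a Peano-kernel or sign argument. Its proof simply replaces each $f^{(2m+2)}(\xi_j)$ by $||f^{(2m+2)}||_{\infty,\Omega}$ \emph{inside} the absolute value of the combined sum and then factors out $\bigl|-b_{m+1}+\sum_{r=1}^m a_r m_{m+1,r}\bigr|=|a_{m+1}|$. Taken literally that step is not justified, since the remainders sit at different intermediate points and a straightforward triangle inequality only yields the larger constant $|b_{m+1}|+\sum_{r=1}^m |a_r|\,|m_{m+1,r}|$. Your proposed fix---using that the functional $R$ annihilates $\Pi^{2m+1}$ to write $R[f]=\int K(t)f^{(2m+2)}(t)\,dt$ and then invoking a mean-value argument once $K$ is shown to be of one sign---is the standard route to the sharp constant $|a_{m+1}|$, and would make your argument more rigorous than the paper's on this point; the remaining obligation on your side is to actually verify the sign of the kernel.
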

\begin{proof}
For simplicity, we develop the proof for $a=0$. Using Taylor's expansions, we have that there exists $\xi_1\in[-\frac{h}{2},\frac{h}{2}]$ such that
\begin{equation}\label{eccor1}
\begin{split}
\bar f(0)&=f(0)+\sum_{t=1}^{m} \frac{h^{2t}}{(2t+1)!2^{2t}}f^{(2t)}(0)+\frac{1}{(2m+3)!2^{2m+2}}f^{(2m+2)}(\xi_1)h^{2m+2}\\
&=f(0)+\sum_{t=1}^{m} \frac{h^{2t}}{(2t+1)!2^{2t}}f^{(2t)}(0)-b_{m+1}f^{(2m+2)}(\xi_1)h^{2m+2}\\
&\leq f(0)+\sum_{t=1}^{m} \frac{h^{2t}}{(2t+1)!2^{2t}}f^{(2t)}(0)-b_{m+1}||f^{(2m+2)}||_{\infty,\Omega}h^{2m+2}
\end{split}
\end{equation}
with $b_{m+1}$ defined in Equation \eqref{valoresb}. Again, using Taylor's expansions, there exists $\xi_2\in[-mh,mh]$ such that
\begin{equation}\label{eccor15}
\begin{split}
\Delta^{2r} \bar f(0)=&\sum_{i=r}^{m}\left( \sum_{t=0}^{i-1}\left(\frac{1}{2^{2t-1}(2t+1)!(2(i-t))!}\sum_{j=r+1}^{2r} (-1)^j\binom{2r}{j}(j-r)^{2(i-t)}\right)h^{2i}f^{(2i)}(0)\right)+\\
&+\sum_{t=0}^{m}\frac{1}{2^{2t-1}(2t+1)!(2(m+1-t))!}\sum_{j=r+1}^{2r} (-1)^j\binom{2r}{j}(j-r)^{2(m+1-t)}h^{2m+2}f^{(2m+2)}(\xi_2)\\
=&\Delta_{m}^{2r} \bar f(0)+m_{m+1,r}f^{(2m+2)}(\xi_2)h^{2m+2}\\
\leq&\Delta_{m}^{2r} \bar f(0)+m_{m+1,r}||f^{(2m+2)}||_{\infty,\Omega}h^{2m+2}\\
\end{split}
\end{equation}
with $m_{m+1,r}$, $r=1,\hdots,m$ defined in Equation \eqref{valoresmatriz}. Finally, by the construction of the system \eqref{sys} we know:
\begin{equation}\label{eccor2}
\sum_{t=1}^{m} \frac{h^{2t}}{(2t+1)!2^{2t}}f^{(2t)}(0)+\sum_{r=1}^{m} a_r \Delta^{2r}_{m} \bar f(0)=0.
\end{equation}
Therefore, collecting Equations \eqref{eccor1}, \eqref{eccor15} and \eqref{eccor2}:
\begin{equation*}
\begin{split}
\left|\sum_{r=0}^{m} a_r \Delta^{2r} \bar f(0)-f(0)\right|&\leq\left|\bar{f}(0)-f(0)-\sum_{t=1}^{m} \frac{h^{2t}}{(2t+1)!2^{2t}}f^{(2t)}(0)+\sum_{r=1}^{m}a_r m_{m+1,r}||f^{(2m+2)}||_{\infty,\Omega}h^{2m+2}\right|\\
&\leq\left|-b_{m+1}||f^{(2m+2)}||_{\infty,\Omega}h^{2m+2}+\sum_{r=1}^{m}a_r m_{m+1,r}||f^{(2m+2)}||_{\infty,\Omega}h^{2m+2}\right|\\
&=\left|-b_{m+1}+\sum_{r=1}^{m}a_r m_{m+1,r} \right|||f^{(2m+2)}||_{\infty,\Omega}h^{2m+2}\\
&=|a_{m+1}|||f^{(2m+2)}||_{\infty,\Omega}h^{2m+2}.\\
\end{split}
\end{equation*}
\end{proof}

\begin{corollary}\label{papolinomios}
Let $m\in\mathbb{N}$ and $P\in\Pi^{2m+1}_1(\mathbb{R})$, then
$$\sum_{r=0}^{m} a_r \Delta^{2r}\bar P(a)=P(a),\,\,\, \forall  \,a\in\mathbb{R},$$
where $a_0=1$ and $\mathbf{a}_{m}=(a_i)_{i=1}^{m}$ is the solution of the system in \eqref{sys}.
\end{corollary}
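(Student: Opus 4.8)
The plan is to read this identity off directly from the sharp error bound of Proposition~\ref{corolarioerror}, exploiting the fact that a polynomial of degree at most $2m+1$ has an identically vanishing $(2m+2)$-th derivative.

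First I would fix an arbitrary point $a\in\mathbb{R}$ and an arbitrary step $h>0$, and choose any closed interval $\Omega$ with $[a-mh,a+mh]\subset\Omega$. Since $P\in\Pi^{2m+1}_1(\mathbb{R})$ is a polynomial of degree $\leq 2m+1$, it is certainly of class $\mathcal{C}^{2m+2}(\Omega)$, so the hypotheses of Proposition~\ref{corolarioerror} are met with $f=P$, and the coefficients $\mathbf{a}_{m}$ entering the approximation are exactly the solution of the system \eqref{sys}, as required by the statement.

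Next I would observe that $\deg P\leq 2m+1$ forces $P^{(2m+2)}\equiv 0$ on $\mathbb{R}$, whence $\|P^{(2m+2)}\|_{\infty,\Omega}=0$. Substituting $f=P$ into the bound \eqref{erroreq} then gives
\begin{equation*}
\left|P(a)-\sum_{r=0}^{m} a_r \Delta^{2r}\bar P(a)\right|\leq |a_{m+1}|\,\cdot 0\cdot h^{2m+2}=0,
\end{equation*}
so the quantity inside the absolute value must be exactly zero, which is precisely $\sum_{r=0}^{m} a_r \Delta^{2r}\bar P(a)=P(a)$. Because $a\in\mathbb{R}$ and $h>0$ were arbitrary, the identity holds for every $a$, as claimed.

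There is no genuine obstacle here: the statement is an immediate corollary of the error formula already established, the only point worth verifying being that the error constant $|a_{m+1}|$ multiplies precisely $\|P^{(2m+2)}\|_{\infty,\Omega}$, which vanishes for polynomials of degree at most $2m+1$. One could alternatively argue from the expansion \eqref{importante} by noting that its $O(h^{2m+2})$ remainder is a Taylor remainder controlled by $f^{(2m+2)}$ and is therefore identically zero for such $P$, but invoking the finished error formula is the most economical route.
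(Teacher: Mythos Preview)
Your proof is correct and is essentially identical to the paper's own argument: the paper simply states that the result is a direct consequence of Proposition~\ref{corolarioerror} because $P^{(2m+2)}(x)=0$ for all $x\in\mathbb{R}$. You have merely spelled out the details (choice of $\Omega$, arbitrariness of $a$ and $h$) that the paper leaves implicit.
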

\begin{proof}
Let $m$ be a natural number and $P(x)\in \Pi^{2m+1}_1(\mathbb{R})$. Then the result is a direct consequence of Proposition \ref{corolarioerror}, since
$P^{(2m+2)}(x)=0$ for all $x\,\in\mathbb{R}$.
\end{proof}

Finally, we show that this new recursive method is equivalent to solving the system introduced in Equation \eqref{sistema}. Note that with our algorithm it is not necessary to compute any coefficient of the polynomials. As a consequence, we can obtain a high order approximation with a low computational cost.

\begin{proposition}
Let $\Omega$ be a closed interval, $m\in\mathbb{N}$, $f\in \mathcal{C}^{2m+2}(\Omega)$, we consider $g\in\Pi^{2m+1}(\mathbb{R})$ such that
\begin{equation}\label{system}
\bar{g}(a+jh)=\bar{f}(a+jh), \quad j=-m,\hdots,m,
\end{equation}
then:
\begin{equation}
\sum_{r=0}^{m} a_r \Delta^{2r} \bar f(a)=g(a),
\end{equation}
where $a_0=1$ and $\mathbf{a}_{m}=(a_1,\hdots,a_{m})$ is the solution of the system \eqref{sys}.
\end{proposition}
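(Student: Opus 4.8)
The plan is to exploit the linearity and, above all, the \emph{locality} of the reconstruction operator, reducing everything to Corollary \ref{papolinomios}. Let $L$ denote the linear functional $\bar u\mapsto \sum_{r=0}^{m}a_r\,\Delta^{2r}\bar u(a)$ acting on cell-average data. The first step is to read off, directly from the definition \eqref{qesdelta}, that for each $r\le m$ the centered difference $\Delta^{2r}\bar u(a)$ is a fixed linear combination of the values $\{\bar u(a+jh)\}_{j=-r}^{r}$, hence of $\{\bar u(a+jh)\}_{j=-m}^{m}$. Consequently $L$ depends on $\bar u$ \emph{only} through these $2m+1$ grid averages; it never probes $u$ anywhere outside the stencil $\{a+jh\}_{|j|\le m}$.

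Next I would use the interpolation hypothesis. Condition \eqref{system} states exactly that $\bar g(a+jh)=\bar f(a+jh)$ for $j=-m,\dots,m$, i.e. $\bar f$ and $\bar g$ agree on the whole stencil. Since $L$ sees the data only on that stencil, this forces $\Delta^{2r}\bar f(a)=\Delta^{2r}\bar g(a)$ for every $r$, and therefore $L(\bar f)=L(\bar g)$, that is $\sum_{r=0}^{m}a_r\,\Delta^{2r}\bar f(a)=\sum_{r=0}^{m}a_r\,\Delta^{2r}\bar g(a)$. Finally, because $g\in\Pi^{2m+1}(\mathbb{R})$, Corollary \ref{papolinomios} applies to $g$ and yields $\sum_{r=0}^{m}a_r\,\Delta^{2r}\bar g(a)=g(a)$. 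Chaining the two equalities gives $\sum_{r=0}^{m}a_r\,\Delta^{2r}\bar f(a)=g(a)$, which is the assertion.

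The proof is short precisely because all the analytic content has been packed into the corollary; the one step that genuinely needs care is the locality claim, namely that the entire combination $L$ reads $\bar f$ only on $\{a+jh\}_{|j|\le m}$, so that matching the cell averages on the stencil is enough to identify $L(\bar f)$ with $L(\bar g)$. A minor related subtlety is that \eqref{system} imposes $2m+1$ conditions on the $(2m+2)$-dimensional space $\Pi^{2m+1}(\mathbb{R})$, so $g$ need not be unique; the argument is nonetheless valid for \emph{any} admissible $g$, and as a byproduct it shows that the value $g(a)$ is independent of the chosen interpolant, since each such interpolant must coincide with $L(\bar f)$ at $a$.
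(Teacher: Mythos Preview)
Your argument is correct and matches the paper's proof essentially line for line: the paper also uses \eqref{qesdelta} to observe that $\Delta^{2r}\bar f(a)=\Delta^{2r}\bar g(a)$ for every $r\le m$ (the locality step), and then invokes Corollary~\ref{papolinomios} on $g$ to conclude. Your closing remark about non-uniqueness of $g$ and the resulting well-definedness of $g(a)$ is a nice addendum that the paper does not make explicit.
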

\begin{proof}
By hypothesis, \eqref{system} and by Equation \eqref{qesdelta}, if $r\leq m$ we have
\begin{equation*}
\Delta^{2r} \bar{f}(a)=\sum_{j=-r}^r (-1)^{r+j} \binom{2r}{j+r}\bar{f}(a+jh)=\sum_{j=-r}^r (-1)^{r+j} \binom{2r}{j+r}\bar{g}(a+jh),
\end{equation*}
then by Corollary \ref{papolinomios}, we get
$$\sum_{r=0}^{m} a_r \Delta^{2r} \bar f(a)=\sum_{r=0}^{m} a_r \Delta^{2r} \bar g(a)=g(a).$$
\end{proof}
As a conclusion, we have developed a direct method to obtain an approximation to the values of the function $f$ from cell-average data. We use a tensor-product strategy to extend the result to 2D; see \ref{seccion2d} for more information. With this extension, it is easy to generalise to $k$ dimensions.

\subsection{From cell-average to point values: The $k$D case}

The generalization to $k$D case is easy using an induction process. Therefore, let $(s_1,\hdots,s_k)\in\mathbb{R}^k$ be an arbitrary point, $\Omega \subset \mathbb{R}^k$ be an open set, with
$\tilde\Omega:=[s_1-m^{x_1} h_{x_1},s_1+m^{x_1} h_{x_1}]\times \hdots \times [s_k-m^{x_k} h_{x_k},s_k+m^{x_k} h_{x_k}]\subset \Omega$ and $f\in \mathcal{C}^p(\Omega)$, $p\geq 2m^{x_j}+2$, $j=1,\hdots,k$. We define:
$$\bar{f}(s_1,\hdots,s_k)=\int_{\tilde \Omega}f(x_1,\hdots,x_k)dx_1\hdots dx_k,$$
then using the same process as in
\ref{seccion2d} we have:
\begin{equation}\label{ecuacioncasond1}
\begin{split}
f(s_1,\hdots,s_k)&=\sum_{i_1=0}^{m^{x_1}}\sum_{i_2=0}^{m^{x_2}}\hdots \sum_{i_k=0}^{m^{x_k}}a_{i_1}a_{i_2}\hdots a_{i_n}\Delta^{2i_1,x_1}\Delta^{2i_2,x_2}\hdots \Delta^{2i_k,x_k}\bar{f}(s_1,\hdots,s_k)+\sum_{j=1}^kO(h_{x_j}^{2m^{x_j}_0+2}).
\end{split}
\end{equation}

Finally, following the same steps as in the previous section, we show a corollary that indicates that this method is exact for a certain class of polynomials. Firstly, we introduce the notation of the tensor product of polynomials. Letting $\mathbf{p}=(p_1,\hdots,p_k)\in\mathbb{N}^k$, we can define
\begin{equation}\label{polinomiosvarias}
\Pi_k^{\mathbf{p}}(\mathbb{R})=\left\{P(x_1,\hdots,x_k)=\sum_{j_1=0}^{p_1}\hdots\sum_{j_k=0}^{p_k}a_{j_1\hdots j_k}x_1^{j_1}\hdots x_k^{j_k}:a_{j_1\hdots j_k}\in\mathbb{R}, 0\leq j_l\leq p_l, l=1,\hdots,k\right\}.
\end{equation}

\begin{corollary}
Let $m^{x_j}\in\mathbb{N}$, $j=1,\hdots,k$ and $P\in\Pi^{(2m^{x_1}+1,2m^{x_2}+1,\hdots,2m^{x_k}+1)}_k(\mathbb{R})$, then
$$P(s_1,\hdots,s_k)=\sum_{i_1=0}^{m^{x_1}}\sum_{i_2=0}^{m^{x_2}}\hdots \sum_{i_k=0}^{m^{x_k}}a_{i_1}a_{i_2}\hdots a_{i_k}\Delta^{2i_1,x_1}\Delta^{2i_2,x_2}\hdots \Delta^{2i_k,x_k}\bar{P}(s_1,\hdots,s_k),\,\,\, \forall  \,(s_1,\hdots,s_k)\in\mathbb{R}^k,$$
where $a_0=1$ and $\mathbf{a}_{m}=(a_i)_{i=1}^{m}$ is the solution of the system \eqref{sys}.
\end{corollary}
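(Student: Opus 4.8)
The plan is to reduce the multivariate identity to the univariate Corollary \ref{papolinomios} by exploiting the tensor-product structure of both the polynomial space and the cell-average operator. First I would note that both sides of the claimed identity are linear in $P$: the cell-average $\bar P$ depends linearly on $P$, and each finite-difference operator $\Delta^{2i_l,x_l}$ is linear, so the whole right-hand side is a linear functional of $P$. Since $\Pi_k^{(2m^{x_1}+1,\ldots,2m^{x_k}+1)}(\mathbb{R})$ is spanned by the monomials $x_1^{j_1}\cdots x_k^{j_k}$ with $0\le j_l\le 2m^{x_l}+1$, it therefore suffices to verify the identity for a single such monomial.

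Next I would use separation of variables. For a monomial $P(x_1,\ldots,x_k)=x_1^{j_1}\cdots x_k^{j_k}$ the integrand factors across the product cell, so by Fubini the multivariate cell-average factors as $\bar P(s_1,\ldots,s_k)=\prod_{l=1}^k \overline{x_l^{j_l}}(s_l)$, where $\overline{x_l^{j_l}}$ denotes the one-dimensional cell-average in the variable $x_l$. Since $\Delta^{2i_l,x_l}$ acts only on the $l$-th argument, the operators in the product commute and distribute over this factorization, so the full sum collapses into a product of univariate sums:
\begin{equation*}
\sum_{i_1=0}^{m^{x_1}}\cdots\sum_{i_k=0}^{m^{x_k}} a_{i_1}\cdots a_{i_k}\,\Delta^{2i_1,x_1}\cdots\Delta^{2i_k,x_k}\bar P(s_1,\ldots,s_k)=\prod_{l=1}^k\left(\sum_{i_l=0}^{m^{x_l}} a_{i_l}\,\Delta^{2i_l}\,\overline{x_l^{j_l}}(s_l)\right).
\end{equation*}
Each univariate factor is exactly the left-hand side of Corollary \ref{papolinomios} applied with $m=m^{x_l}$ to the polynomial $x_l^{j_l}\in\Pi^{2m^{x_l}+1}_1(\mathbb{R})$, hence equals $s_l^{j_l}$. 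Multiplying the factors back together recovers $P(s_1,\ldots,s_k)$, which closes the argument. Equivalently, this factorization can be organized as an induction on $k$, freezing the first $k-1$ variables and applying the one-dimensional result in the $x_k$-direction.

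The genuine analytic content has already been spent in establishing Corollary \ref{papolinomios}; here the work is essentially bookkeeping, and I expect the only delicate points to be organizational rather than substantive. The first is making precise that the cell-average operator factors over the product cell, which relies on the tensor-product domain and on Fubini's theorem for the separable integrand. The second, and the one most likely to cause confusion, is that the weights $a_{i_l}$ appearing in the $l$-th factor are the components of the solution of the system \eqref{sys} for the parameter $m^{x_l}$ specific to that coordinate direction; when the orders $m^{x_l}$ differ across directions these are genuinely different coefficient vectors, and the compact notation $a_{i_1}\cdots a_{i_k}$ must be read with this dependence in mind.
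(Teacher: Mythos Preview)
Your proposal is correct and follows essentially the same tensor-product reduction to the univariate Corollary~\ref{papolinomios} that the paper uses; the paper (in the 2D appendix, then by induction) freezes all but one variable and applies the 1D result directly to the resulting univariate polynomial, which is precisely the inductive organization you mention at the end. Your monomial-plus-Fubini presentation is a minor reorganization of the same idea, and your closing remark about the coefficients $a_{i_l}$ depending on $m^{x_l}$ is a useful clarification that the paper's notation leaves implicit.
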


\begin{example}
We consider $n=3$, $0<h=h_{x}=h_{y}=h_{z}$ and $m^{x}=m^{y}=m^{z}=1$, then
\begin{equation*}
\begin{split}
f(a,b,c)=&\bar{f}(a,b,c)-\frac{1}{24}(\Delta^{2,x}\bar{f}(a,b,c)+\Delta^{2,y}\bar{f}(a,b,c)+\Delta^{2,z}\bar{f}(a,b,c))+\frac{1}{576}\Delta^{2,x}\Delta^{2,y}\bar{f}(a,b,c)\\
&+\frac{1}{576}\left(\Delta^{2,x}\Delta^{2,z}\bar{f}(a,b,c)+\Delta^{2,y}\Delta^{2,z}\bar{f}(a,b,c)\right)-\frac{1}{13824}\Delta^{2,x}\Delta^{2,y}\Delta^{2,z}\bar{f}(a,b,c)+O(h^{4}).\\
\end{split}
\end{equation*}
\end{example}

\section{Explicit full order global spline approximation over a rectangle}

\subsection{Univariate quasi-interpolation operators from $q$-average data}\label{univariate}

Given the values of  $f\in \mathcal{C}^{p+1}(\mathbb{R})$ at equidistant points $x_n=nh$, $n \in\mathbb{Z}$, an efficient way of achieving a global $O(h^{p+1})$ approximation of $f$ is by simple local combinations of B-splines of degree $p$ (see e.g. \cite{speleers}). For example, let $\{f(nh)\}$ be the values of $f\in \mathcal{C}^4(\mathbb{R})$, and let $B_3(x)$ be the cubic B-spline supported on $[-2,2]$, with equidistant knots $\{-2,-1,0,1,2\}$. Defining the quasi-interpolation operator $Q_3$ as
\begin{equation}\label{quasi}
Q_3(f)(x)=\sum_{n\in\mathbb{Z}}\big(-\frac{1}{6}f((n-1)h)+\frac{4}{3}f(nh)-\frac{1}{6}f((n+1)h)\big)B_3(\frac{x}{h}-n),
\end{equation}
it follows that $Q_3$ reproduces $\Pi^3(\mathbb{R})$, the space of polynomials of degree $\le 3$, and on any finite interval $I$,
$$\|f-Q_3(f)\|_\infty=O(h^4),$$
as $h\to 0$.

As we have mentioned in many cases, the given data is the cell-average of an integrable function or another local average (see e.g. \cite{donohoetal,harten}), as the hat-average. Thus, it is not possible to use the quasi-interpolation formula, for example, the one shown in Equation \eqref{quasi}. An alternative possibility is to define this operator depending on the new type of data using cell-averages. In this case, the reproduction of polynomials is not ensured. In this section, we prove a relation between the point-value and cell-average operators depending on the degree of the B-spline used, and we generalise this relation. This section is divided in two parts: In the first we introduce the problem and prove some auxiliary lemmas. In the second part we show the main theorem and introduce some examples.

\subsubsection{Notation: local averages and auxiliary results}

We denote by $B_p(x)$ the $p$-degree B-spline supported on
\begin{equation}\label{Ip}
I_p =\left[-\frac{p+1}{2},\frac{p+1}{2}\right],
\end{equation}
with equidistant knots
\begin{equation}\label{knots}
S_p = \left\{-\frac{p+1}{2},\hdots,\frac{p+1}{2}\right\}.
\end{equation}
Let $\{f(nh)\}$ be the values of $f$. We define the vector
$$f_{n,p}=(f_{n-\left\lfloor \frac{p}{2} \right\rfloor},\hdots,f_{n+\left\lfloor \frac{p}{2} \right\rfloor}),$$
where $f_n=f(nh)$ and the floor function $\lfloor \cdot \rfloor:\mathbb{R}\to \mathbb{Z}$ and ceiling function $\lceil \cdot \rceil:\mathbb{R}\to \mathbb{Z}$ are:
$$\lfloor x \rfloor=\max\{z\in\mathbb{Z}:z\leq x\}, \quad \lceil x \rceil=\min\{z\in\mathbb{Z}:z\geq x\}.$$
The classical quasi-interpolation operator $Q_p$ is defined as
\begin{equation}\label{operatorQ}
Q_p(f)(x)=\sum_{n\in \mathbb{Z}}L_{p}(f_{n,p})B_p\left(\frac{x}{h}-n \right),
\end{equation}
with the linear operator $L_p:\mathbb{R}^{2\left\lfloor \frac{p}{2} \right\rfloor+1}\to \mathbb{R}$ defined using the ideas presented in \cite{speleers} as:
\begin{equation}\label{operadorL}
L_p(f_{n,p})=\sum_{j=-\left\lfloor \frac{p}{2} \right\rfloor}^{\left\lfloor \frac{p}{2} \right\rfloor} c_{p,j} f_{n+j},
\end{equation}
where the coefficients $c_{p,j}$, $j=-\left\lfloor \frac{p}{2} \right\rfloor,\hdots,\left\lfloor \frac{p}{2} \right\rfloor$ are:
\begin{equation}\label{coeficientesc}
c_{p,j}=\sum_{l=0}^{\left\lfloor \frac{p+1}{2} \right\rfloor-1}\frac{t(2l+p+1,p+1)}{{{2l+p+1}\choose{p+1}}}\sum_{i=0}^{2l}\frac{(-1)^i}{i!(2l-i)!}\delta_{l-i+\left\lceil\frac{p+1}{2}\right\rceil,j+1+\left\lfloor\frac{p}{2}\right\rfloor},
\end{equation}
 $\delta_{i,j}$ the Kronecker delta, i.e.,
$$ \delta_{i,j}=\left\{
       \begin{array}{ll}
            1, &\hbox{if} \,\, i=j; \\
         0, &\hbox{if} \,\, i\neq j; \\
       \end{array}
     \right.
$$
and $t(i,j)$ are the central factorial numbers of the first kind (see \cite{speleers} and \cite{butzeretal}) which can be computed recursively as:
\begin{equation*}
t(i,j)=\left\{
         \begin{array}{ll}
           0, & \hbox{if}\,\, j>i, \\
           1, & \hbox{if}\,\, j=i, \\
           t(i-2,j-2)-\left(\frac{i-2}{2}\right)^2t(i-2,j), & \hbox{if} \,\, 2\leq j <i,
         \end{array}
       \right.
\end{equation*}
with
$$t(i,0)=0,\quad t(i,1)=\prod_{l=1}^{m-1}\left(\frac{m}{2}-l\right),\,\, i\geq 2,$$
and $t(0,0)=t(1,1)=1, t(0,1)=t(1,0)=0$.
We introduce the norm of the operator $L_p$ which will be useful to prove the approximation order of the new operators.
\begin{defn}\label{def:normaL}
The infinity norm of the operator $L_p$, Equation \eqref{operadorL}, is defined as:
$$||L_p||_\infty=\sum_{j=-\left\lfloor \frac{p}{2} \right\rfloor}^{\left\lfloor \frac{p}{2} \right\rfloor} |c_{p,j}|.$$
\end{defn}
In Table \ref{tablaCs} some values of $c_{p,j}$ are showed, jointly with the norm of  the operator $L_p$.
\begin{table}[htbp]
  \begin{center}
   \begin{tabular}{crrrrrrr}\hline
                             & $c_{1,j}$ & $c_{2,j}$ & $c_{3,j}$ & $c_{4,j}$& $c_{5,j}$ & $c_{6,j}$  &$c_{7,j}$     \\    \hline
                 $ j=0$      &  $1$  & $5/4 $    & $4/3$    & $319/192$  & $73/40$  & $79879/34560$  & $2452/945$             \\
                 $ j=1$      &       & $-1/8 $   & $-1/6$   & $-107/288$ & $-7/15$  & $-37003/46080$ & $-1657/1680$             \\
                 $ j=2$      &       &           &          & $47/1152$  & $13/240$ & $751/4608$     & $22/105$     \\
                 $ j=3$      &       &           &          &            &          & $-2159/138240$ & $-311/15120$\\\hline
$||L_p||_\infty$                    & $1$   & $3/2$     & $5/3$    & $179/72$   & $43/15$  & $9233/2160$    & $4751/945$ \\\hline
   \end{tabular}
    \caption{The values are symmetric, i.e. $c_{p,j}=c_{p,-j}$, $k=0,\hdots,\left\lfloor\frac{p}{2}\right\rfloor$}
    \label{tablaCs}
  \end{center}
\end{table}

In general, we have the following result about quasi-interpolation by splines with a uniform knots' sequence (see for example \cite{speleers}):
\begin{proposition}\label{quasiintprop}
Consider a $p$ degree B-spline, $B_{p}$ with knots $S_p$, Equation \eqref{knots}, then the local operator $Q_{p}$ defined in Equation \eqref{operatorQ}  reproduces $\Pi^{p}(\mathbb{R})$ and achieves $O(h^{p+1})$ approximation order.
\end{proposition}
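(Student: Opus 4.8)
The plan is to split the statement into its two assertions --- exact reproduction of $\Pi^{p}(\mathbb{R})$ and the $O(h^{p+1})$ approximation order --- and to derive the second from the first by a standard local Taylor argument. The reproduction property is the substantive part; once it is in hand the error estimate is essentially soft, relying only on the compact support and the partition-of-unity property of the B-splines together with the operator norm $\|L_p\|_\infty$ of Definition \ref{def:normaL}.

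First I would establish polynomial reproduction, i.e. $Q_p(P)=P$ for every $P\in\Pi^{p}(\mathbb{R})$. Since the knots $S_p$ in \eqref{knots} are equidistant, every B-spline appearing in \eqref{operatorQ} is an integer translate of the single function $B_p$, so $Q_p$ commutes with the shift $x\mapsto x+h$; combined with homogeneity under $h$, this reduces the claim to verifying reproduction of the monomials $1,x,\dots,x^{p}$ at a single reference configuration, hence to a finite family of algebraic identities in the coefficients $c_{p,j}$ of \eqref{coeficientesc}. To close these identities I would invoke Marsden's identity for B-splines, which expresses any polynomial of degree $\le p$ uniquely as a B-spline series whose coefficients are obtained by applying the associated dual functionals to $P$. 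The heart of the matter is then to show that the discrete operator $L_p$ of \eqref{operadorL}, when fed the sampled values $f_{n+j}=P(nh+jh)$, returns exactly those B-spline coefficients; this is precisely the property for which the weights $c_{p,j}$ were built out of the central factorial numbers $t(\cdot,\cdot)$, following \cite{speleers}. I expect this algebraic verification --- the combinatorial identity linking the central-factorial-number expansion to the B-spline coefficients of a polynomial --- to be the main obstacle, since everything else is routine.

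Granting reproduction, the error bound follows locally. Fixing a finite interval $I$ and a point $x$, only the $O(1)$ many translates $B_p(x/h-n)$ whose supports contain $x$ contribute to $Q_p(f)(x)$, and the data they use lies in an $O(h)$ neighbourhood of $x$. I would Taylor-expand $f\in\mathcal{C}^{p+1}$ about $x$ as $f=T+R$ with $T\in\Pi^{p}(\mathbb{R})$ and $|R|\le C\,h^{p+1}\|f^{(p+1)}\|_{\infty}$ on that neighbourhood. By reproduction $Q_p(T)=T$, so by linearity $f-Q_p(f)=R-Q_p(R)$. Using that the B-splines are nonnegative and form a partition of unity, $\sum_{n}B_p(x/h-n)=1$, together with the operator norm of Definition \ref{def:normaL}, I would bound
$$|Q_p(R)(x)|\le \|L_p\|_{\infty}\,\max|R|\,\sum_{n}B_p(x/h-n)\le \|L_p\|_{\infty}\,C\,h^{p+1}.$$
Combining the two estimates gives $|f(x)-Q_p(f)(x)|\le (1+\|L_p\|_{\infty})\,C\,h^{p+1}$ uniformly on $I$, which is the claimed $O(h^{p+1})$ order and completes the argument.
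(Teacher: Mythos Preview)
The paper does not prove this proposition at all: it is stated as a known result with the reference ``see for example \cite{speleers}'' and no proof is given. So there is no paper proof to compare your proposal against.

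That said, your outline is the standard one and is correct. Your handling of the reproduction property is honest in that you identify the combinatorial identity linking the central-factorial weights $c_{p,j}$ to the B-spline dual functionals as the crux, and you defer it to \cite{speleers}; this is exactly what the paper does by citation. Your derivation of the $O(h^{p+1})$ bound from reproduction via a local Taylor expansion, the partition of unity $\sum_n B_p(x/h-n)=1$, and the norm $\|L_p\|_\infty$ is essentially the same argument the paper later carries out in detail in the proof of Theorem~\ref{quasiintprop3} for the generalised operator $Q_p^q$ (with $q=0$ your case is recovered). So your approach is consistent with, and in fact anticipated by, the paper's own later reasoning.
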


In this section we develop an extended version of the operator $Q_p$ using the cell-average discretisation defined in Equation \eqref{eq01}:
\begin{equation*}
\bar f(nh)=\frac{1}{h}\int_{nh-\frac{h}{2}}^{nh+\frac{h}{2}} f(x) dx,
\end{equation*}
which is usually employed in image processing or signal processing applications. In general, using the notation introduced by Harten in \cite{harten}, we define for $q\in \mathbb{N}$ the weight functions $\omega^q$ as the repeated convolution:
\begin{equation}\label{omegadefinition}
\begin{split}
&\omega^0(x)=\delta_{x,0},\\
&\omega^q(x)=\omega^{q-1} \ast \chi_{[-\frac{1}{2},\frac{1}{2}]}(x), \quad q\in\mathbb{N},
\end{split}
\end{equation}
using the characteristic function in the interval $[-1/2,1/2]$:
\begin{equation*}
\chi_{[-\frac{1}{2},\frac{1}{2}]}=\left\{
         \begin{array}{ll}
           1, & \hbox{if}\,\, x\in [-\frac{1}{2},\frac{1}{2}], \\
           0, & \hbox{otherwise}.\\
         \end{array}
       \right.
\end{equation*}
Note that $\omega^{p+1}=B_p$. With respect to these even functions, given a value $h>0$, we consider the $q$-average data (or $q$-moment, see \cite{donohoetal}) as the inner products:
\begin{equation}\label{defdiscretizacion}
f^{q}_n:=\left\langle f, \frac{1}{h}\omega^q\left(\frac{\cdot}{h}-n \right)\right\rangle=\frac{1}{h}\int_{\mathbb{R}} f(x)\omega^q\left(\frac{x}{h}-n\right)dx =\frac{1}{h}\int_{\mathbb{R}} f(x)\omega^q\left(\frac{nh-x}{h}\right)dx = (f\ast \omega^q_h)(n h),
\end{equation}
with
$$\omega^q_h(x)=\frac{1}{h}\omega^q\left(\frac{x}{h}\right).$$
If $q=0$ the point-value discretisation is recovered,
\begin{equation*}
f^{0}_n=f(nh)=f_n,
\end{equation*}
and if $q=1$ we get the cell-average data
\begin{equation*}
f^{1}_n=\frac{1}{h}\int_{nh-\frac{h}{2}}^{nh+\frac{h}{2}} f(x) dx=\bar f(nh)=:\bar f_n.
\end{equation*}
In the following we construct a function $\Theta^q_{p}:\mathbb{R}^{2\xi_q(p)+1}\to \mathbb{R}$ and an associated operator denoted by:
\begin{equation}\label{operatorbarQ}
Q^q_p(f)(x)=\sum_{n\in \mathbb{Z}}\Theta^q_{p}(f^q_{n,\xi_q(p)})B_p\left(\frac{x}{h}-n \right),
\end{equation}
with
$$f^q_{n,\xi_q(p)}= \left(f^{q}_{n-\left\lfloor \frac{\xi_q(p)}{2} \right\rfloor},\hdots,f^{q}_{n+\left\lfloor \frac{\xi_q(p)}{2} \right\rfloor}\right),$$
where $\xi_p: \mathbb{N}\to \mathbb{N}$, and such that under some conditions
$$Q^q_p(P)=Q_p(P),$$
for any polynomial $P\in\Pi^p(\mathbb{R})$. Thus, we will have an operator based on cell-average data which approximates the function with the same order of accuracy as the approximation defined by \eqref{operatorQ} using point-values.

Let's start with some auxiliary results. We prove the following well-known result in order to use it in the proof of the main result.
\begin{lemma}\label{propconv}
Consider a $p$ degree B-spline, $B_{p}$ with knots $S_p$ (Equation \eqref{knots}) and the function $\omega^q$, with $q\in\mathbb{N}$ defined in Equation \eqref{omegadefinition} then:
\begin{enumerate}
\item $B_{p}\ast \omega^q(x)=B_{p+q}(x).$ \label{propconv1}
\item $\omega_h^p\ast \omega_h^q(x)=\omega^{p+q}_h(x).$ \label{propconv2}
\item $\left(B_{p} \left(\frac{\cdot}{h}-n\right)\ast \omega^q_h\right)(x)=B_{p+q}\left(\frac{x}{h}-n\right).$ \label{propconv3}
\end{enumerate}
\end{lemma}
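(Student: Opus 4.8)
The plan is to reduce all three identities to the single additive law $\omega^a \ast \omega^b = \omega^{a+b}$, which simply expresses that $\omega^q$ is the $q$-fold convolution power of $\chi_{[-1/2,1/2]}$. First I would establish this law. By the definition \eqref{omegadefinition}, $\omega^0$ is the convolution identity and $\omega^q = \omega^{q-1}\ast\chi_{[-\frac{1}{2},\frac{1}{2}]}$, so an immediate induction on $q$ gives that $\omega^q$ is the $q$-fold self-convolution of $\chi_{[-\frac{1}{2},\frac{1}{2}]}$. Since convolution is associative and commutative, regrouping the $a+b$ copies yields $\omega^a \ast \omega^b = \omega^{a+b}$ for all $a,b\in\mathbb{N}$.

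With this law in hand, part \ref{propconv1} is immediate. Using the identity $\omega^{p+1} = B_p$ recorded in the text, I would write $B_p\ast\omega^q = \omega^{p+1}\ast\omega^q = \omega^{p+q+1} = B_{p+q}$. For part \ref{propconv2} I would invoke the elementary dilation rule for convolution: if $f_h(x)=\frac{1}{h}f(x/h)$ and $g_h(x)=\frac{1}{h}g(x/h)$, then the substitution $u=y/h$ in the convolution integral gives $(f_h\ast g_h)(x)=\frac{1}{h}(f\ast g)(x/h)$. Applying this with $f=\omega^p$, $g=\omega^q$ and then using the additive law yields $(\omega_h^p\ast\omega_h^q)(x)=\frac{1}{h}\omega^{p+q}(x/h)=\omega_h^{p+q}(x)$.

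Finally, part \ref{propconv3} combines part \ref{propconv2} with translation invariance. The dilated spline appearing in \eqref{operatorbarQ} satisfies $B_p(x/h-n)=B_p((x-nh)/h)=h\,\omega_h^{p+1}(x-nh)$, since $\omega_h^{p+1}(x)=\frac{1}{h}B_p(x/h)$. Because convolution commutes with translation, convolving with $\omega_h^q$ and applying part \ref{propconv2} gives $h\,\omega_h^{p+q+1}(x-nh)$, which equals $\omega^{p+q+1}(x/h-n)=B_{p+q}(x/h-n)$.

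None of the steps presents a genuine obstacle; the only real care needed is the bookkeeping of the $1/h$ normalization factors. In particular, one must notice that the spline $B_p(\cdot/h-n)$ is \emph{not} normalized, so the compensating factor $h$ in part \ref{propconv3} has to be tracked carefully through the dilation rule. The single point deserving an explicit remark is the status of $\omega^0$: although it is written as a Kronecker symbol, in the convolution it must be read as the Dirac mass at $0$, which is precisely what makes it the neutral element and validates the base case of the induction.
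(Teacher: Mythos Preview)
Your argument is correct, and the underlying mechanism is the same as the paper's: both proofs rest on the additive law $\omega^a\ast\omega^b=\omega^{a+b}$ together with the change of variables that tracks the $1/h$ scaling. The difference is in packaging. The paper proceeds computationally: for each item it writes out the convolution integral with $q=1$, performs the substitution $z=y/h$ (or $z=y/h-n$), identifies the result as the next B-spline, and then inducts on $q$. You instead isolate two general principles up front---the dilation rule $(f_h\ast g_h)(x)=\frac{1}{h}(f\ast g)(x/h)$ and translation invariance of convolution---and then each item becomes a one-line application of $\omega^{p+1}=B_p$ and the additive law. Your route is slightly cleaner and makes the role of the normalization factor $h$ in part~\ref{propconv3} more transparent; the paper's route has the virtue of being entirely self-contained at the level of explicit integrals. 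Your remark that $\omega^0$ must be read as the Dirac mass rather than a Kronecker symbol is well taken and worth stating, since the paper's notation $\omega^0(x)=\delta_{x,0}$ is formally a pointwise function, not a distribution.
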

\begin{proof}
\begin{enumerate}
\item From
$$B_{p}\ast \omega^1(x)=B_{p}\ast\chi_{[-\frac{1}{2},\frac{1}{2}]}(x)=B_{p+1}(x),$$
we get
$$B_{p}\ast \omega^q(x)=B_{p+q}(x).$$
\item From
\begin{equation}
\begin{split}
\omega_h^1\ast \omega_h^1(x)&=\frac{1}{h^2}\int_{\mathbb{R}} \chi_{[-\frac{1}{2},\frac{1}{2}]}\left(\frac{y}{h}\right)\chi_{[-\frac{1}{2},\frac{1}{2}]}\left(\frac{x-y}{h}\right)dy=\frac{1}{h}\int_{\mathbb{R}} \chi_{[-\frac{1}{2},\frac{1}{2}]}\left(z\right)\chi_{[-\frac{1}{2},\frac{1}{2}]}\left(\frac{x}{h}-z\right)dz\\
&=\frac{1}{h}\omega^2\left(\frac{x}{h}\right)=\omega^2_h(x),
\end{split}
\end{equation}
we get \ref{propconv}.\ref{propconv2}.
\item From
\begin{equation}
\begin{split}
\left(B_{p} \left(\frac{\cdot}{h}-n\right)\ast \omega^1_h\right)(x)&=\int_{\mathbb{R}} B_p\left(\frac{y}{h}-n\right)\frac{1}{h}\chi_{[-\frac{1}{2},\frac{1}{2}]}\left(\frac{x-y}{h}\right)dy=\int_{\mathbb{R}} B_p(z)\chi_{[-\frac{1}{2},\frac{1}{2}]}\left(\frac{x}{h}-n-z\right)dz\\
&=B_{p+1}\left(\frac{x}{h}-n\right),\\
\end{split}
\end{equation}
we have
$$\left(B_{p} \left(\frac{\cdot}{h}-n\right)\ast \omega^q_h\right)(x)=B_{p+q}\left(\frac{x}{h}-n\right).$$
\end{enumerate}
\end{proof}
With these results we have all the tools needed to construct the function $\Theta^q_p$ and the associated operator $Q^q_p$.
\subsubsection{Main result and examples}
We start proving a relation between the evaluation of the function $L_p$ for point-value data and $L_{p+q}$ for $q$-local average data, $f^q$.
\begin{theorem}\label{teoremaimportante}
Let $L_p$ be the operator defined in Equation \eqref{operadorL}, $h>0$, $n\in\mathbb{Z}$, $q\in \mathbb{N}$ and $f\in \Pi^p(\mathbb{R})$ then
\begin{equation}
L_{p+q}(f^q_{n,p+q})=L_{p}\left(f_{n,p}\right).
\end{equation}
\end{theorem}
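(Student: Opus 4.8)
The goal is to show that applying the point-value operator $L_p$ to the samples $f_{n,p}$ of a polynomial $f\in\Pi^p(\mathbb{R})$ gives the same number as applying $L_{p+q}$ to the $q$-average data $f^q_{n,p+q}$. The plan is to exploit the defining property of the operators $L_p$, namely that they are the linear functionals appearing in the quasi-interpolation operators $Q_p$, which by Proposition \ref{quasiintprop} reproduce polynomials. The key observation I would use is the convolution identity of Lemma \ref{propconv}, part \ref{propconv3}, which converts $q$-average data of $f$ into ordinary point-value data of the smoothed function $f\ast\omega^q_h$. Because $f$ is a polynomial of degree $\le p$ and $\omega^q_h$ integrates to $1$ with vanishing odd moments, the convolution $f\ast\omega^q_h$ is again a polynomial of degree $\le p$; this is the structural fact that makes the two sides comparable.

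First I would unwind the definition of the $q$-average data. By Equation \eqref{defdiscretizacion} we have $f^q_n=(f\ast\omega^q_h)(nh)$, so the vector $f^q_{n,p+q}$ is exactly the vector of point-values of the function $F:=f\ast\omega^q_h$ sampled at the relevant grid nodes. Hence $L_{p+q}(f^q_{n,p+q})=L_{p+q}(F_{n,p+q})$, where $F_{n,p+q}$ denotes ordinary point-value samples of $F$. This reduces the theorem to a statement purely about the point-value operators: I must show $L_{p+q}(F_{n,p+q})=L_p(f_{n,p})$ when $F=f\ast\omega^q_h$ and $f\in\Pi^p(\mathbb{R})$.

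Next I would bring in the quasi-interpolation operators attached to these functionals. The cleanest route is to form the operator $Q_{p+q}$ built from $L_{p+q}$ and the B-spline $B_{p+q}$, and compute $Q_{p+q}(f)$ two ways. On one hand, by Proposition \ref{quasiintprop}, $Q_{p+q}$ reproduces $\Pi^{p+q}(\mathbb{R})\supseteq\Pi^p(\mathbb{R})$, so $Q_{p+q}(f)=f$. On the other hand, using the convolution relation $B_p\ast\omega^q = B_{p+q}$ from Lemma \ref{propconv}.\ref{propconv1} (equivalently part \ref{propconv3} in the scaled form $B_p(\cdot/h-n)\ast\omega^q_h = B_{p+q}(\cdot/h-n)$), I would relate the expansion of $Q_{p+q}(f)$ in the $B_{p+q}$ basis to an expansion of $Q_p(F)$ in the $B_p$ basis. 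Concretely, convolving $Q_p(F)=\sum_n L_p(F_{n,p})\,B_p(\tfrac{\cdot}{h}-n)$ with $\omega^q_h$ and using linearity together with Lemma \ref{propconv}.\ref{propconv3} turns each $B_p(\tfrac{\cdot}{h}-n)$ into $B_{p+q}(\tfrac{\cdot}{h}-n)$, while the coefficient side $L_p(F_{n,p})$ needs to be matched against $L_{p+q}(f^q_{n,p+q})$. The coefficients of a function in the cardinal B-spline basis are unique once polynomial reproduction pins them down, and comparing the two representations term by term in $n$ yields the claimed identity $L_{p+q}(f^q_{n,p+q})=L_p(f_{n,p})$.

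I expect the main obstacle to be the bookkeeping of indices and of the smoothing step: one must verify that $F=f\ast\omega^q_h$ is genuinely a degree-$\le p$ polynomial (so that $L_p(F_{n,p})$ is meaningful and so that the reproduction property applies), and that the shift-index ranges in $f_{n,p}$ versus $f^q_{n,p+q}$ line up correctly after convolution, since $L_{p+q}$ uses a wider stencil ($2\lfloor\tfrac{p+q}{2}\rfloor+1$ nodes) than $L_p$. The delicate point is justifying that equality of the two $B_{p+q}$-spline expansions forces equality of the scalar coefficients node by node; this follows from linear independence of the shifted B-splines $\{B_{p+q}(\tfrac{\cdot}{h}-n)\}_{n\in\mathbb{Z}}$, but it should be invoked explicitly rather than assumed. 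Once these two points are settled, the identity drops out immediately from the reproduction property and Lemma \ref{propconv}.
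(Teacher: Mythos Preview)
Your proposal assembles the right ingredients --- polynomial reproduction (Proposition~\ref{quasiintprop}), the convolution identity of Lemma~\ref{propconv}, and linear independence of the shifted B-splines --- and the overall strategy matches the paper's proof. But you have swapped the roles of $f$ and $F=f\ast\omega^q_h$ in the two quasi-interpolation expansions, and this breaks the argument. You apply $Q_p$ to $F$ and then convolve with $\omega^q_h$; since $Q_p(F)=F$, this produces $F\ast\omega^q_h=f\ast\omega^{2q}_h$, whose $B_{p+q}$-coefficients $L_p(F_{n,p})$ cannot be matched against $L_{p+q}(f^q_{n,p+q})=L_{p+q}(F_{n,p+q})$, the $B_{p+q}$-coefficients of $F$ itself. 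Likewise, your ``first way'' computes $Q_{p+q}(f)=f$, whose coefficients $L_{p+q}(f_{n,p+q})$ involve \emph{point values} of $f$, not the $q$-averages you need. The two representations you set up live over two different functions ($f$ versus $f\ast\omega^{2q}_h$), so comparing coefficients yields nothing.

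The fix, which is precisely the paper's argument, is to reverse the assignment: apply $Q_p$ to $f$ to get $f=\sum_n L_p(f_{n,p})\,B_p(\tfrac{\cdot}{h}-n)$, convolve with $\omega^q_h$ to obtain $F=\sum_n L_p(f_{n,p})\,B_{p+q}(\tfrac{\cdot}{h}-n)$, and independently apply $Q_{p+q}$ to $F\in\Pi^p\subset\Pi^{p+q}$ to get $F=\sum_n L_{p+q}(F_{n,p+q})\,B_{p+q}(\tfrac{\cdot}{h}-n)$. Now both expansions represent the same function $F$ in the same basis, and linear independence gives $L_p(f_{n,p})=L_{p+q}(F_{n,p+q})=L_{p+q}(f^q_{n,p+q})$.
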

\begin{proof}
If $f\in\Pi^p(\mathbb{R})$ by Proposition \ref{quasiintprop} we have
\begin{equation*}
f(x)=\sum_{n\in\mathbb{Z}}L_{p}\left(f_{n-\left\lfloor \frac{p}{2} \right\rfloor},\hdots,f_{n+\left\lfloor \frac{p}{2} \right\rfloor}\right)B_{p}\left(\frac{x}{h}-n\right),
\end{equation*}
and if we convolve $f$ with $\omega^q_h$ and use Lemma \ref{propconv} we get
\begin{equation}\label{f}
\begin{split}
f\ast \omega^q_h(x)&=\left(\left(\sum_{n\in\mathbb{Z}}L_{p}\left(f_{n-\left\lfloor \frac{p}{2} \right\rfloor},\hdots,f_{n+\left\lfloor \frac{p}{2} \right\rfloor}\right)B_{p} \left(\frac{\cdot}{h}-n\right)\right)\ast \omega^q_h\right)(x)\\
&=\sum_{n\in\mathbb{Z}}L_{p}\left(f_{n-\left\lfloor \frac{p}{2} \right\rfloor},\hdots,f_{n+\left\lfloor \frac{p}{2} \right\rfloor}\right)B_{p+q}\left(\frac{x}{h}-n\right).\\
\end{split}
\end{equation}
If $f\in\Pi^p(\mathbb{R})$ then $f\ast \omega_h^q\in \Pi^{p}(\mathbb{R})$ and by Proposition \ref{quasiintprop}:
\begin{equation}\label{fw}
f\ast \omega_h^{q}(x)=\sum_{n\in\mathbb{Z}}L_{p+q}(f^q_{n-\left\lfloor \frac{p+q}{2} \right\rfloor},\hdots,f^q_{n+\left\lfloor \frac{p+q}{2} \right\rfloor})B_{p+q}\left(\frac{x}{h}-n\right).
\end{equation}
By \eqref{f}, \eqref{fw} and from the fact that $B_{p+q}$ are linearly independent we get the result.
\end{proof}
Note that if $q=1$ then we have the relation between cell-average and point-values.
\begin{corollary}\label{corchulo}
Let $L_p$ be the operator defined in Equation \eqref{operadorL}, $h>0$, $n\in\mathbb{Z}$, $q_1,q_2\in \mathbb{N}$ and $f\in \Pi^p(\mathbb{R})$ then
\begin{equation}
L_{p+q_1}(f^{q_1}_{n,p+q_1})=L_{p+q_2}(f^{q_2}_{n,p+q_2}).
\end{equation}
\end{corollary}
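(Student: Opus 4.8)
The plan is to obtain this statement as an immediate consequence of Theorem \ref{teoremaimportante}, exploiting the fact that the right-hand side of that theorem does not depend on the averaging order. The key observation is that Theorem \ref{teoremaimportante} expresses every quantity of the form $L_{p+q}(f^q_{n,p+q})$ in terms of a single reference quantity, namely $L_p(f_{n,p})$, which involves only the point-value data $f_{n,p}$ and is completely independent of $q$. Thus two different averaging orders are forced to agree because both reduce to this common value.

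First I would apply Theorem \ref{teoremaimportante} with $q=q_1$. Since $f\in\Pi^p(\mathbb{R})$, the hypotheses of the theorem are satisfied, and we obtain
$$L_{p+q_1}(f^{q_1}_{n,p+q_1})=L_p(f_{n,p}).$$
Next I would apply the same theorem with $q=q_2$, which is legitimate because the condition $f\in\Pi^p(\mathbb{R})$ holds regardless of the value of the averaging order, yielding
$$L_{p+q_2}(f^{q_2}_{n,p+q_2})=L_p(f_{n,p}).$$
Finally, comparing the two identities, both left-hand sides equal the common value $L_p(f_{n,p})$, so by transitivity of equality we conclude that $L_{p+q_1}(f^{q_1}_{n,p+q_1})=L_{p+q_2}(f^{q_2}_{n,p+q_2})$, which is the desired result.

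There is no genuine obstacle here: the argument merely chains two instances of the preceding theorem through their shared right-hand side. The only point meriting attention is that the hypothesis $f\in\Pi^p(\mathbb{R})$ is indispensable, since the reduction to $L_p(f_{n,p})$ in Theorem \ref{teoremaimportante} rests on the polynomial-reproduction property of $Q_p$ from Proposition \ref{quasiintprop} and would fail for a general function. I would therefore simply emphasize that this polynomial hypothesis is inherited unchanged by both applications of the theorem, so no additional regularity or compatibility condition is needed.
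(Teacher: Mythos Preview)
Your proof is correct and is essentially identical to the paper's own argument: the paper also applies Theorem \ref{teoremaimportante} once with $q=q_1$ and once with $q=q_2$, obtaining $L_{p+q_1}(f^{q_1}_{n,p+q_1})=L_{p}(f_{n,p})=L_{p+q_2}(f^{q_2}_{n,p+q_2})$ in a single chain of equalities.
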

\begin{proof}
Let $q_1,q_2$ be two constants, and $f\in \Pi^p(\mathbb{R})$; by Theorem \ref{teoremaimportante} we obtain:
\begin{equation}
\begin{split}
L_{p+q_1}(f^{q_1}_{n,p+q_1})&=L_{p}\left(f_{n,p}\right)=L_{p+q_2}(f^{q_2}_{n,p+q_2}).
\end{split}
\end{equation}
\end{proof}
We introduce some examples to clarify this main result.
\begin{example} We have chosen some specific values for the parameters $p$ and $q$. If $q=1$, the cell-average and point-value relation is obtained.
\begin{itemize}
\item For $p=2$, and $q=1$, we suppose that $f\in \Pi^{2}(\mathbb{R})$. We get:
\begin{equation*}
\begin{split}
L_3(\bar f_{n-1},\bar f_n,\bar f_{n+1})&=-\frac{1}{6}\bar f_{n-1}+\frac{4}{3}\bar f_n-\frac{1}{6}\bar f_{n+1}\\
&=-\frac18f_{n-1}+\frac54f_{n}-\frac18f_{n+1}\\
&=L_2(f_{n-1},f_{n},f_{n+1}).
\end{split}
\end{equation*}
\item For $p=3$ and $q=1$, we suppose that $f\in \Pi^{3}(\mathbb{R})$ and the result is:
\begin{equation*}
\begin{split}
L_4(\bar f_{n-2},\bar f_{n-1},\bar f_n,\bar f_{n+1},\bar f_{n+2})&=\frac{47}{1152}(\bar f_{n-2}+\bar f_{n+2})-\frac{107}{288}(\bar f_{n-1}+ \bar f_{n+1})+\frac{319}{192}\bar f_{n}\\
&=-\frac{1}{6} f_{n-1}+\frac{4}{3}f_n-\frac{1}{6}f_{n+1}\\
&=L_3(f_{n-1},f_{n},f_{n+1}).
\end{split}
\end{equation*}
\item For $p=3$ and $q=2$, we suppose that $f\in \Pi^{3}(\mathbb{R})$, we obtain:
\begin{equation*}
\begin{split}
L_5(  f^2_{n-2},  f^2_{n-1},  f^2_n,  f^2_{n+1},  f^2_{n+2})&=\frac{13}{240}(  f^2_{n-2}+  f^2_{n+2})-\frac{7}{15}(  f^2_{n-1}+  f^2_{n+1})+\frac{73}{40}  f^2_{n}\\
&=-\frac{1}{6} f_{n-1}+\frac{4}{3}f_n-\frac{1}{6}f_{n+1}\\
&=L_3(f_{n-1},f_{n},f_{n+1}).
\end{split}
\end{equation*}
\item For $p=5$ and $q=2$, we suppose that $f\in \Pi^{5}(\mathbb{R})$, we have:
\begin{equation*}
\begin{split}
L_7(  f^2_{n-3},  f^2_{n-2},  f^2_{n-1},  f^2_n,  f^2_{n+1},  f^2_{n+2},  f^2_{n+3})=&-\frac{311}{15120}(  f^2_{n-3}+  f^2_{n+3})+\frac{22}{105}(  f^2_{n-2}+  f^2_{n+2})\\
&-\frac{1657}{1680}(  f^2_{n-1}+   f^2_{n+1})+\frac{2452}{945}  f^2_{n}\\
=&\frac{13}{240}(f_{n-2}+f_{n+2})-\frac{7}{15}(f_{n-1}+f_{n+1})+\frac{73}{40}f_{n}.\\
\end{split}
\end{equation*}
\end{itemize}
\end{example}
\subsection{The new quasi-interpolation operators}
In view of Theorem \ref{teoremaimportante}, we define the operator $Q^q_p$, Equation \eqref{operatorbarQ}  with $\Theta^q_p=L_{p+q}$ and $\xi_q(p)=p+q$, i.e.
\begin{equation}\label{operatorbarQfinal}
Q^q_p(f)(x)=\sum_{n\in \mathbb{Z}}L_{p+q}(f^q_{n,p+q})B_p\left(\frac{x}{h}-n \right),
\end{equation}

We summarize all the results in the following corollaries.
\begin{corollary}\label{cor1}
Let $L_p$ be the operator defined in Equation \eqref{operadorL}, $h>0$, $P\in\Pi^p(\mathbb{R})$ and let $Q_p$ and $Q^q_p$ be the operators defined in Equations \eqref{operatorQ} and \eqref{operatorbarQfinal} respectively. Then:
\begin{equation}
Q^q_{p}(P)=Q_{p}(P).
\end{equation}
\end{corollary}
\begin{proof}
By Theorem \ref{teoremaimportante}, we have:
\begin{equation*}
\begin{split}
Q^q_p(P)(x)&=\sum_{n\in \mathbb{Z}}L_{p+q}(P^q_{n-\left\lfloor \frac{p+q}{2} \right\rfloor},\hdots,P^q_{n+\left\lfloor \frac{p+q}{2} \right\rfloor})B_p\left(\frac{x}{h}-n \right)=\sum_{n\in \mathbb{Z}}L_{p}(P_{n-\left\lfloor \frac{p}{2} \right\rfloor},\hdots,P_{n+\left\lfloor \frac{p}{2} \right\rfloor})B_p\left(\frac{x}{h}-n \right)=Q_p(P)(x).
\end{split}
\end{equation*}
\end{proof}
\begin{corollary}\label{quasiintprop2}
Consider a $p$ degree B-spline, $B_{p}$ with knots $S_p$, Equation \eqref{knots}, then the local operator $Q^q_{p}$ defined in Equation \eqref{operatorbarQ} with $\Theta^q_p=L_{p+q}$ and $\xi_q(p)=p+q$ on $q$-average data reproduces $\Pi^{p}(\mathbb{R})$.
\end{corollary}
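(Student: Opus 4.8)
The plan is to reduce the claim to two results that are already in place, namely Corollary \ref{cor1} and the classical reproduction statement in Proposition \ref{quasiintprop}. Since the assertion concerns only polynomial reproduction, I would fix an arbitrary $P\in\Pi^p(\mathbb{R})$ and show directly that $Q^q_p(P)=P$, where $Q^q_p$ is the operator of \eqref{operatorbarQ} under the choices $\Theta^q_p=L_{p+q}$ and $\xi_q(p)=p+q$.

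First I would invoke Corollary \ref{cor1}, which states that on $\Pi^p(\mathbb{R})$ the $q$-average operator and the point-value operator coincide, that is $Q^q_p(P)=Q_p(P)$. This is precisely the payoff of Theorem \ref{teoremaimportante}, where the local identity $L_{p+q}(f^q_{n,p+q})=L_p(f_{n,p})$ was established for $f\in\Pi^p(\mathbb{R})$; term by term, the two linear functionals return the same B-spline coefficients, so the two global sums agree. I would then apply Proposition \ref{quasiintprop}, which guarantees that the classical quasi-interpolation operator $Q_p$ reproduces $\Pi^p(\mathbb{R})$, so that $Q_p(P)=P$. Chaining the two equalities yields $Q^q_p(P)=Q_p(P)=P$, which is exactly the desired reproduction property.

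The honest remark is that there is no real obstacle remaining at this stage: all the substantive work has been carried out in Theorem \ref{teoremaimportante} (the transfer between $q$-average data and point-value data on polynomials) and in the Speleers construction underlying Proposition \ref{quasiintprop}. The only verification needed is that the hypotheses line up, i.e. that the operator named in Corollary \ref{quasiintprop2} is literally the one to which Corollary \ref{cor1} applies, with the same $\Theta^q_p=L_{p+q}$ and $\xi_q(p)=p+q$; once this bookkeeping is checked, the conclusion is immediate.
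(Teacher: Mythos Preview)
Your proposal is correct and follows essentially the same route as the paper: fix $P\in\Pi^p(\mathbb{R})$, apply Corollary~\ref{cor1} to obtain $Q^q_p(P)=Q_p(P)$, and then use the polynomial reproduction of $Q_p$ from Proposition~\ref{quasiintprop} to conclude $Q^q_p(P)=P$. The paper's proof is the one-line chain $P(x)=Q_p(P)(x)=Q^q_p(P)(x)$, which is precisely what you describe.
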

\begin{proof}
Let $P\in\Pi^p(\mathbb{R})$, then by Corollary \ref{cor1}:
$$P(x)=Q_p(P)(x)=Q^q_p(P)(x).$$
\end{proof}

\begin{theorem}\label{quasiintprop3}
Let $h>0$, $N,p,q\in \mathbb{N}$ be some constants, $B_p$ be a $p$ degree B-spline with knots $S_p$, Equation \eqref{knots}, the local operator $Q^q_{p}$ defined in Equation \eqref{operatorbarQfinal} on $q$-average data and let $\Omega=[-Nh,Nh]$ be a closed interval. Considering $f\in\mathcal{C}^{p+1}(\tilde\Omega)$, with
$\tilde\Omega=[-Mh,Mh],$ being $$M=\frac{q}{2}+N+\frac{p+1}{2}+\left\lfloor \frac{p+q}{2}\right\rfloor,$$
assume we are given the $q$-average data of $f$, $f^q_n$, on $\tilde\Omega$.
Then
$$\max\{|f(x)-Q^q_{p}(f)(x)|:x\in\Omega\}=:||f-Q^q_{p}(f)||_{\infty,\Omega} \leq ((p+2)\alpha||L_{p+q}||_\infty+1)C h^{p+1},$$
where  $$C=\frac{||f^{p+1)}||_{\infty,\tilde \Omega}}{(p+1)!}\quad  \text{and} \quad \alpha=\left(\frac{q}{2}+\frac{p+1}{2}+\left\lfloor \frac{p+q}{2}\right\rfloor\right)^{p+1}.$$
\end{theorem}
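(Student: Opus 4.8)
The goal is a standard quasi-interpolation error estimate of the form $O(h^{p+1})$, and the natural route is to exploit polynomial reproduction (Corollary~\ref{quasiintprop2}) together with a local Taylor expansion of $f$. The plan is to bound the pointwise error $|f(x)-Q^q_p(f)(x)|$ on $\Omega$ by comparing $f$ with its local Taylor polynomial and using the fact that $Q^q_p$ reproduces polynomials of degree $\le p$ exactly. First I would fix an arbitrary $x\in\Omega$ and let $T$ be the degree-$p$ Taylor polynomial of $f$ about a conveniently chosen center near $x$. Writing $f=T+R$ with remainder $R$, linearity of $Q^q_p$ gives $f-Q^q_p(f)=(f-T)+(T-Q^q_p(T))+Q^q_p(T-f)=(f-T)-Q^q_p(R)$, since $Q^q_p(T)=T$ by Corollary~\ref{quasiintprop2}. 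Thus the error splits into the Taylor remainder at $x$ plus the action of $Q^q_p$ on the remainder.

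\textbf{Key steps.} The first summand $|f(x)-T(x)|=|R(x)|$ is controlled by the Lagrange form of the Taylor remainder, giving $|R(x)|\le C h^{p+1}$ with $C=\|f^{(p+1)}\|_{\infty,\tilde\Omega}/(p+1)!$; this accounts for the ``$+1$'' in the final constant. For the second summand I would expand $Q^q_p(R)(x)=\sum_n L_{p+q}(R^q_{n,p+q})\,B_p(\tfrac{x}{h}-n)$ and estimate it term by term. Here the crucial geometric observation is locality: for fixed $x\in\Omega$, only those $n$ with $B_p(\tfrac{x}{h}-n)\ne0$ contribute, and since $B_p$ is supported on $I_p$ of width $p+1$, there are at most $p+2$ such indices $n$. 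Using the partition-of-unity/boundedness property $\sum_n B_p\le 1$, each B-spline factor is bounded by $1$. Then $|L_{p+q}(R^q_{n,p+q})|\le \|L_{p+q}\|_\infty \cdot \max_j |R^q_{n+j}|$ by Definition~\ref{def:normaL}, and each $q$-average value $R^q_{n+j}$ of the remainder is itself bounded by $\alpha C h^{p+1}$, where $\alpha$ collects the worst-case distance from $x$ to the data points $(n+j)h$ entering the stencil—raised to the power $p+1$ from the $h^{p+1}$ remainder scaling. This is exactly where the parameter $\alpha=\bigl(\tfrac{q}{2}+\tfrac{p+1}{2}+\lfloor\tfrac{p+q}{2}\rfloor\bigr)^{p+1}$ comes from: it is the largest number of cell-widths between $x$ and any sample point that the operator touches, accounting for the averaging radius $q/2$, the B-spline half-support $(p+1)/2$, and the stencil half-width $\lfloor(p+q)/2\rfloor$.

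\textbf{Assembling the bound.} Combining the $(p+2)$ nonzero indices, the operator norm $\|L_{p+q}\|_\infty$, and the per-term bound $\alpha C h^{p+1}$ yields $|Q^q_p(R)(x)|\le (p+2)\,\alpha\,\|L_{p+q}\|_\infty\, C h^{p+1}$, and adding the Taylor-remainder contribution $Ch^{p+1}$ gives precisely
$$|f(x)-Q^q_p(f)(x)|\le \bigl((p+2)\alpha\|L_{p+q}\|_\infty+1\bigr)C h^{p+1}.$$
Taking the supremum over $x\in\Omega$ completes the argument. The definition of $M$ guarantees that every data point $(n+j)h$ entering any active stencil lies inside $\tilde\Omega$, so the remainder bound via $\|f^{(p+1)}\|_{\infty,\tilde\Omega}$ is legitimate; I would verify this containment explicitly as a preliminary step.

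\textbf{Main obstacle.} The routine parts are the Taylor estimate and the finitely-many-terms counting. The delicate step is bounding the $q$-averaged remainder $R^q_{n+j}$ uniformly and identifying the correct value of $\alpha$: one must carefully track that the $q$-average $R^q_{n+j}=(R\ast\omega^q_h)((n+j)h)$ integrates $R$ against a kernel supported within distance $q/2$ (in cell-widths) of $(n+j)h$, so the effective evaluation points range over an interval whose farthest point from $x$ is at distance $(\tfrac{q}{2}+\tfrac{p+1}{2}+\lfloor\tfrac{p+q}{2}\rfloor)h$. Getting this worst-case radius exactly right—so that it matches the stated $\alpha$ and simultaneously certifies that $\tilde\Omega$ contains all sampled points—is where the proof requires genuine care rather than mechanical computation.
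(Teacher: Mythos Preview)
Your proposal is correct and follows essentially the same route as the paper's proof: fix a point, expand $f$ as a degree-$p$ Taylor polynomial $P$ about a nearby grid point $n_0h$, use polynomial reproduction (Corollary~\ref{quasiintprop2}) to reduce the error to $R-Q^q_p(R)$ with $R=f-P$, then bound $|Q^q_p(R)(x_0)|$ by counting at most $p+2$ active B-splines, applying $\|L_{p+q}\|_\infty$, and estimating each $|R^q_{n+j}|\le \alpha C h^{p+1}$ via the support of $\omega^q$. Your identification of the origin of $\alpha$ and the role of $M$ matches the paper exactly.
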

\begin{proof}
Let $x_0\in \Omega$. There exists $n_0\in\mathbb{Z}$, with $|n_0|<N$ such that:
\begin{equation}\label{cota0}
(n_0-1)h\leq x_0 \leq n_0 h \to |x-n_0h|\leq h,
\end{equation}
also, note that since the support of $B_p$ is $I_p=[-\frac{p+1}{2},\frac{p+1}{2}]$, Equation \eqref{Ip}, we have:
\begin{equation}\label{interval}
B_p\left(\frac{x_0}{h} -n\right) =B_p\left(\frac{n_0h}{h} -n\right)= B_p\left(n_0 -n\right)\neq 0 \Leftrightarrow n\in\left[n_0-\frac{p+1}{2},n_0+\frac{p+1}{2}\right].
\end{equation}
From $f\in\mathcal{C}^{p+1}(\tilde \Omega)$, by Taylor's expansion there exists $\xi \in \tilde \Omega$ such that
$$R(x):=f(x)-P(x)=\frac{f^{p+1)}(\xi)}{(p+1)!}(x-n_0h)^{p+1},$$
and $P\in\Pi^p(\mathbb{R})$. As $\tilde \Omega$ is compact, there exists $C>0$ such that
$$|R(x)|\leq C |x-n_0h|^{p+1}.$$
If $j \in \left[n_0-\frac{p+1}{2}-\left\lfloor \frac{p+q}{2}\right\rfloor,n_0+\frac{p+1}{2}+\left\lfloor \frac{p+q}{2}\right\rfloor\right]$ and $ z\in [-\frac{q}{2},\frac{q}{2}]$ from
$$|z+j-n_0|^{p+1}\leq \left(\frac{q}{2}+\frac{p+1}{2}+\left\lfloor \frac{p+q}{2}\right\rfloor\right)^{p+1}=:\alpha,$$
we have that,
\begin{equation}\label{cota1}
|R(hz+hj)|\leq C|hz+hj-n_0h|^{p+1} =Ch^{p+1}|z+j-n_0|^{p+1}\leq \alpha C h^{p+1},
\end{equation}
and by Equation \eqref{defdiscretizacion} and \eqref{cota1}, we get
\begin{equation}\label{cota2}
\begin{split}
f_j^q-P_j^q&=\left\langle f, \frac{1}{h}\omega^q\left(\frac{\cdot}{h}-j \right)\right\rangle-\left\langle P, \frac{1}{h}\omega^q\left(\frac{\cdot}{h}-j \right)\right\rangle=\left\langle R, \frac{1}{h}\omega^q\left(\frac{\cdot}{h}-j \right)\right\rangle=R_j^q\\
&=\frac{1}{h}\int_{\mathbb{R}} R(x)\omega^q\left(\frac{x}{h}-j\right)dx =\int_{\mathbb{R}} R(hz+hj)\omega^q\left(z\right)dz\\
&=\int_{\mathbb{R}} R(hz+hj)B_{q-1}\left(z\right)dz=\int_{-\frac q2}^{\frac q2} R(hz+hj)B_{q-1}\left(z\right)dz\\
|R_j^q|&\leq \alpha C h^{p+1}\int_{-\frac q2}^{\frac q2} B_{q-1}\left(z\right)dz = \alpha C h^{p+1}.
\end{split}
\end{equation}
If $n\in [n_0-\frac{p+1}{2},n_0+\frac{p+1}{2}]$, by Def. \ref{def:normaL} and \eqref{cota2}:
\begin{equation}\label{cota3}
|L_{p+q}(f_{n,p+q})-L_{p+q}(P_{n,p+q})|=|L_{p+q}(R_{n,p+q})|\leq\sum_{j=-\left\lfloor \frac{p+q}{2}\right\rfloor}^{\left\lfloor \frac{p+q}{2}\right\rfloor} |c_{p+q,j}||R^q_{n+j}|\leq \alpha C||L_{p+q}||_\infty h^{p+1}.
\end{equation}
By \eqref{interval} and \eqref{cota3}
\begin{equation}\label{cota4}
\begin{split}
|Q^q_p(f)(x_0)-Q^q_{p}(P)(x_0)| = |Q^q_p(R)(x_0)| &=\left|\sum_{n\in\mathbb{Z}}L_{p+q}(R^q_{n,p+q})B_{p}\left(\frac{x_0}{h}-n\right) \right|\\
&\leq \sum_{n=\left\lceil n_0 -\frac{p+1}{2}\right\rceil}^{\left\lfloor n_0+\frac{p+1}{2}\right\rfloor}|L_{p+q}(R^q_{n,p+q})|\left|B_{p}\left(\frac{x_0}{h}-n\right) \right|\\
&\leq (p+2)\alpha C ||L_{p+q}||_\infty h^{p+1}.
\end{split}
\end{equation}
By Proposition \ref{quasiintprop} we have that $Q_p(P)=P$, by Corollary \ref{quasiintprop2}
$Q^q_p(P)=Q_p(P)$ and using \eqref{cota4} we obtain:
\begin{equation}
\begin{split}
|Q^q_{p}(f)(x_0)-f(x_0)|&=|Q^q_{p}(f)(x_0)-Q_p(P)(x_0)+P(x_0)-f(x_0)|\\
&=|Q^q_p(f)(x_0)-Q^q_{p}(P)(x_0)+P(x_0)-f(x_0)|\\
&\leq|Q^q_p(R)(x_0)|+|R(x_0)|\\
&\leq ((p+2)\alpha||L_{p+q}||_\infty+1)C h^{p+1}.
\end{split}
\end{equation}
\end{proof}

With this result, we have a way to obtain an approximation of the function $f$ with an approximation order $O(h^{p+1})$ using the $q$-average data, for example, cell-average. In practice, we will have the data from $q$-average discretisations in an interval, and we need to know the $q$-average in the extended interval $\tilde \Omega$. As a matter of fact, it is enough to know these values within $O(h^{p+1})$ accuracy. Since $f\in \mathcal{C}^{p+1}(\Omega)$, we suggest extending the given values  using a simple polynomial extrapolation. We rely here on an assumption that the function $f$ can be extended smoothly. This assumption is valid by using Whitney's extension theorem \cite{whitney}.

\subsection{Multivariate quasi-interpolators from $q$-average data}
In this section, we suppose a real function $f:\mathbb{R}^k\to \mathbb{R}$,  $\mathbf{p}=(p_1,\hdots,p_k)$ parameters defining the polynomials in Equation \eqref{polinomiosvarias},
$\Pi^{\mathbf{p}}(\mathbb{R}),$ we let $h>0$, $\mathbf{n}=(n_1,\hdots,n_k)$  and consider the values
$$\{f_{\mathbf{n}}=f({n_{1} h,\hdots,n_{k}h})\}.$$
We define
$$f_{\mathbf{n},\mathbf{p}}=\left\{f_{(n_{1}+j_1,\hdots,n_{k}+j_k)}:0\leq |j_l|\leq \left\lfloor \frac{p_l}{2}\right\rfloor, l=1,\hdots,k\right\}.$$
Note that if $k=1$, $f_{n,p}=(f_{n-\left\lfloor \frac{p}{2} \right\rfloor},\hdots,f_{n+\left\lfloor \frac{p}{2} \right\rfloor})$.
We consider the following operator which is the tensor product of $L_p$, Equation \eqref{operadorL},
\begin{equation}\label{operadorLmulti}
L_{\mathbf{p}}(f_{\mathbf{n},\mathbf{p}})=\sum_{j_1=-\left\lfloor \frac{p_1}{2} \right\rfloor}^{\left\lfloor \frac{p_1}{2} \right\rfloor}\hdots\sum_{j_k=-\left\lfloor \frac{p_k}{2} \right\rfloor}^{\left\lfloor \frac{p_k}{2} \right\rfloor} c_{p_1,j_1}\hdots c_{p_k,j_k} f_{n_1+j_1,\hdots,n_k+j_k},
\end{equation}
where $c_{p,j}$ are the values defined in Equation \eqref{coeficientesc}. Finally, if $\mathbf{x}=(x_1,\hdots,x_k)$ and $\mathbf{q}=(q,\hdots,q)$ then we define the tensor products:
$$B_{\mathbf{p}}\left(\frac{\mathbf{x}}{h}-\mathbf{n}\right)=\prod_{l=1}^kB_{p_{l}}\left(\frac{x_l}{h}-n_l\right), \quad \omega^{\mathbf{q}}\left(\mathbf{x}\right)=\prod_{l=1}^k\omega^q(x_l).$$
With these elements it is well-known that the operator:
\begin{equation}\label{operatorQmulti}
Q_{\mathbf{p}}(f)(\mathbf{x})=\sum_{\mathbf{n}\in\mathbb{Z}^k} L_{\mathbf{p}}(f_{\mathbf{n},\mathbf{p}})B_{\mathbf{p}}\left(\frac{\mathbf{x}}{h}-\mathbf{n}\right),
\end{equation}
reproduces tensor product polynomials of degrees up to $\mathbf{p}$, \cite{speleers}. Thus, if we denote as
$$f^q_{\mathbf{n}}=f\ast\omega_h^{\mathbf{q}}(\mathbf{n}h),$$
with
$$\omega_h^{\mathbf{q}}=\frac{1}{h^k}\omega^q\left(\frac{\mathbf{x}}{h}\right),$$
we can present the following corollary.
\begin{corollary}\label{cormulti}
We consider $\mathbf{p} \in \mathbb{N}^k$, $q\in \mathbb{N}$, $n\in\mathbb{Z}$, the operator $L_{\mathbf{p}}$ defined in Equation \eqref{operadorLmulti} and $f\in \Pi^{\mathbf{p}}(\mathbb{R})$, then:
\begin{equation}
L_{\mathbf{p}+\mathbf{q}}(f^q_{\mathbf{n},\mathbf{p}+\mathbf{q}})=L_{\mathbf{p}}(f^q_{\mathbf{n},\mathbf{p}}),
\end{equation}
being $\mathbf{q}=(q,\hdots,q)$.
\end{corollary}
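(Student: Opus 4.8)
The plan is to reduce the multivariate identity to the univariate Theorem \ref{teoremaimportante} by exploiting that every object involved---the averaging kernel $\omega^{\mathbf{q}}$, the data-sampling map $f\mapsto f^q_{\mathbf{n}}$, and the operators $L_{\mathbf{p}}$ and $L_{\mathbf{p}+\mathbf{q}}$---is a tensor product of its univariate counterparts, together with the observation that both sides are linear in $f$.

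First I would record linearity: the map $f\mapsto f^q_{\mathbf{n}}=(f\ast\omega_h^{\mathbf{q}})(\mathbf{n}h)$ is linear in $f$, and both $L_{\mathbf{p}}$ and $L_{\mathbf{p}+\mathbf{q}}$ are linear in their data vectors by \eqref{operadorLmulti}. Hence the whole assertion is linear in $f$, and it suffices to verify it on a spanning set of $\Pi^{\mathbf{p}}(\mathbb{R})$. For this I would take the tensor-product monomials $f(\mathbf{x})=\prod_{l=1}^{k}x_l^{j_l}$ with $0\leq j_l\leq p_l$.

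Next, for such a factored $f$ I would show that the $q$-average data itself factorizes. Since $\omega^{\mathbf{q}}(\mathbf{x})=\prod_{l=1}^{k}\omega^{q}(x_l)$, Fubini gives $f^q_{\mathbf{n}}=\prod_{l=1}^{k}(g_l)^{q}_{n_l}$, where $g_l(t)=t^{j_l}\in\Pi^{p_l}(\mathbb{R})$ and $(g_l)^{q}_{n_l}$ is the univariate $q$-average datum. Substituting this into \eqref{operadorLmulti} and using that the coefficient weights split as $c_{p_1+q,j_1}\cdots c_{p_k+q,j_k}$ (respectively $c_{p_1,j_1}\cdots c_{p_k,j_k}$), the multivariate operators separate into products of univariate ones, namely $L_{\mathbf{p}+\mathbf{q}}(f^q_{\mathbf{n},\mathbf{p}+\mathbf{q}})=\prod_{l=1}^{k}L_{p_l+q}\big((g_l)^{q}_{n_l,p_l+q}\big)$ and, by the same token, $L_{\mathbf{p}}(f_{\mathbf{n},\mathbf{p}})=\prod_{l=1}^{k}L_{p_l}\big((g_l)_{n_l,p_l}\big)$. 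I would then apply Theorem \ref{teoremaimportante} coordinate by coordinate: because each $g_l\in\Pi^{p_l}(\mathbb{R})$, we have $L_{p_l+q}\big((g_l)^{q}_{n_l,p_l+q}\big)=L_{p_l}\big((g_l)_{n_l,p_l}\big)$, and taking the product over $l=1,\dots,k$ and re-assembling by linearity finishes the argument.

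I expect the \emph{main obstacle} to be the clean separation of variables in the middle step, i.e.\ checking that the single finite sum defining $L_{\mathbf{p}+\mathbf{q}}$ genuinely factors as a product of univariate sums once the data has been shown to factor. This is pure multi-index bookkeeping, but it must be done carefully so that the floor-function index ranges $|j_l|\leq\lfloor(p_l+q)/2\rfloor$ line up exactly with the window sizes required for Theorem \ref{teoremaimportante} to apply in each coordinate; the passage to a general $f\in\Pi^{\mathbf{p}}(\mathbb{R})$ by linearity is then immediate.
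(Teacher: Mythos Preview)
Your proposal is correct and follows essentially the same route as the paper: the paper's proof is the single line ``It follows directly from Theorem~\ref{teoremaimportante},'' and your argument simply spells out what ``directly'' means here---reducing by linearity to tensor-product monomials, factoring the $q$-average data and the operator $L_{\mathbf{p}+\mathbf{q}}$ through the tensor-product structure, and then invoking the univariate theorem in each coordinate. The bookkeeping you flag as the main obstacle is indeed just that, and your handling of it is fine.
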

\begin{proof}
It follows directly from by Theorem \ref{teoremaimportante}.
\end{proof}

If we define
\begin{equation}\label{Qqmulti}
Q^q_{\mathbf{p}}(f)(\mathbf{x})=\sum_{\mathbf{n}\in\mathbb{Z}^k} L_{\mathbf{p}+\mathbf{q}}(f^q_{\mathbf{n},\mathbf{p}+\mathbf{q}})B_{\mathbf{p}}\left(\frac{\mathbf{x}}{h}-\mathbf{n}\right),
\end{equation}
a direct consequence of Corollary \ref{cormulti} is the following corollary.
\begin{corollary}
Let $Q_{\mathbf{p}}$ and $Q^q_{\mathbf{p}}$ be  the operators  defined in Equation \eqref{operatorQmulti} and \eqref{Qqmulti} with $\mathbf{p} \in \mathbb{N}^k$, $q\in \mathbb{N}$ and $f\in \Pi^{\mathbf{p}}(\mathbb{R})$, then:
\begin{equation}
Q_{\mathbf{p}}(f)=Q^q_{\mathbf{p}}(f),
\end{equation}
where $\mathbf{q}=(q,\hdots,q)$.
\end{corollary}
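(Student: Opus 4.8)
The plan is to mirror the univariate argument of Corollary~\ref{cor1}: I would show that, term by term, the two operators place the same coefficient in front of each translated tensor-product B-spline $B_{\mathbf{p}}(\mathbf{x}/h-\mathbf{n})$, and then invoke linearity of the sum to conclude that the two functions coincide identically on $\mathbb{R}^k$.

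First I would write both operators in their defining form. From Equation~\eqref{operatorQmulti}, for a fixed multi-index $\mathbf{n}\in\mathbb{Z}^k$ the coefficient attached to $B_{\mathbf{p}}(\mathbf{x}/h-\mathbf{n})$ in $Q_{\mathbf{p}}(f)$ is $L_{\mathbf{p}}(f_{\mathbf{n},\mathbf{p}})$, built from the point values of $f$. From Equation~\eqref{Qqmulti}, the coefficient attached to the same B-spline in $Q^q_{\mathbf{p}}(f)$ is $L_{\mathbf{p}+\mathbf{q}}(f^q_{\mathbf{n},\mathbf{p}+\mathbf{q}})$, built from the $q$-average data, with $\mathbf{q}=(q,\hdots,q)$.

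The decisive step is Corollary~\ref{cormulti}, which asserts precisely that for $f\in\Pi^{\mathbf{p}}(\mathbb{R})$ these two coefficients are equal for every $\mathbf{n}$. Substituting this equality into each summand of $Q^q_{\mathbf{p}}(f)$ turns it into the corresponding summand of $Q_{\mathbf{p}}(f)$, so the two series agree term by term and therefore as functions. No convergence or linear-independence subtlety arises here, because we are matching the two expansions coefficient by coefficient rather than solving for the coefficients as in Theorem~\ref{teoremaimportante}.

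The only point that warrants care — and the closest thing to an obstacle — is the multi-index bookkeeping: one must confirm that the tensor-product discretisation $f^q_{\mathbf{n}}=f\ast\omega_h^{\mathbf{q}}(\mathbf{n}h)$ factors coordinatewise, so that Corollary~\ref{cormulti} (itself a coordinatewise application of Theorem~\ref{teoremaimportante}, valid because $f\in\Pi^{\mathbf{p}}(\mathbb{R})$ restricts to a polynomial of degree at most $p_l$ in each variable) genuinely applies to the tensor operator $L_{\mathbf{p}+\mathbf{q}}$. Once this factorisation is noted, the corollary is an immediate consequence of Corollary~\ref{cormulti}.
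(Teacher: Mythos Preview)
Your proposal is correct and matches the paper's approach exactly: the paper gives no explicit proof, stating only that the corollary is ``a direct consequence of Corollary~\ref{cormulti}'', which is precisely the term-by-term coefficient comparison you spell out. Your extra remark about the coordinatewise factorisation of $f^q_{\mathbf{n}}$ is a reasonable sanity check but is already absorbed into the statement of Corollary~\ref{cormulti} as given.
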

\begin{example}
We consider a simple example when $\mathbf{p}=(2,2)$ and $q=1$, ($f^1=\bar{f}$) then
\begin{equation}
\begin{split}
L_{(3,3)}(\bar{f}_{(n_1,n_2),(3,3)})=&\frac{16}{9}\bar f_{(n_1,n_2)} -\frac29(\bar f_{(n_1-1,n_2)}+\bar f_{(n_1+1,n_2)}+\bar f_{(n_1,n_2-1)}+\bar f_{(n_1,n_2+1)})\\
&+\frac{1}{36}(\bar f_{(n_1-1,n_2-1)}+\bar f_{(n_1+1,n_2+1)}+\bar f_{(n_1+1,n_2-1)}+\bar f_{(n_1-1,n_2+1)}) \\
=&\frac{25}{16}f_{(n_1,n_2)} -\frac{5}{32}(f_{(n_1-1,n_2)}+f_{(n_1+1,n_2)}+f_{(n_1,n_2-1)}+f_{(n_1,n_2+1)})\\
&+\frac{1}{64}(f_{(n_1-1,n_2-1)}+f_{(n_1+1,n_2+1)}+f_{(n_1+1,n_2-1)}+f_{(n_1-1,n_2+1)}) \\
=&L_{(2,2)}({f}_{(n_1,n_2),(2,2)}).
\end{split}
\end{equation}
\end{example}
Finally, we summarise this result in the following corollary.
\begin{corollary}
Consider a $\mathbf{p}=(p_1,\hdots,p_k)$ degree multi B-spline, $B_{\mathbf{p}}$, the tensor product of the $B_{p_l}$, $1\leq l\leq k$, with knots $S_{p_l}$ Equation \eqref{knots}, then the local operator $Q^{q}_{\mathbf{p}}$ defined in Equation \eqref{Qqmulti}  reproduces $\Pi^{\mathbf{p}}(\mathbb{R})$ and achieves $O(h^{\min_{1\leq l \leq k}\{p_l\}+1})$ approximation order.
\end{corollary}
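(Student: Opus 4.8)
The plan is to prove the two assertions separately, reducing each to a result already established for the univariate operator. The reproduction property is immediate: if $P\in\Pi^{\mathbf{p}}(\mathbb{R})$, then the corollary immediately preceding this statement gives $Q^q_{\mathbf{p}}(P)=Q_{\mathbf{p}}(P)$, and since the classical tensor-product operator $Q_{\mathbf{p}}$ of Equation \eqref{operatorQmulti} reproduces tensor-product polynomials of degrees up to $\mathbf{p}$ (following \cite{speleers}), we obtain $Q^q_{\mathbf{p}}(P)=Q_{\mathbf{p}}(P)=P$. Hence $Q^q_{\mathbf{p}}$ reproduces $\Pi^{\mathbf{p}}(\mathbb{R})$.

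For the approximation order I would exploit the tensor-product structure of $Q^q_{\mathbf{p}}$. Writing $Q^{q}_{p_l,(l)}$ for the univariate operator $Q^q_{p_l}$ of Equation \eqref{operatorbarQfinal} acting on the $l$-th coordinate with the remaining variables frozen, the factorisations $B_{\mathbf{p}}=\prod_l B_{p_l}$, $L_{\mathbf{p}+\mathbf{q}}=\bigotimes_l L_{p_l+q}$, and $f^q_{\mathbf{n}}=f\ast\omega_h^{\mathbf{q}}(\mathbf{n}h)$ appearing in \eqref{Qqmulti} show that
$$Q^q_{\mathbf{p}}=Q^{q}_{p_1,(1)}\circ\cdots\circ Q^{q}_{p_k,(k)},$$
a composition of commuting univariate operators, each acting on a distinct variable.

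The key step is then the telescoping identity
$$I-\prod_{l=1}^k Q^{q}_{p_l,(l)}=\sum_{l=1}^k\Big(\prod_{j<l} Q^{q}_{p_j,(j)}\Big)\big(I-Q^{q}_{p_l,(l)}\big),$$
where the products denote ordered compositions. Applying this to $f$ and taking sup-norms, I would bound each summand by the product of the operator norms of the factors $Q^{q}_{p_j,(j)}$ with $j<l$, times $\big\|(I-Q^{q}_{p_l,(l)})f\big\|_{\infty}$. Each univariate operator is uniformly bounded in $h$: at any point at most $p_j+2$ of the B-splines are nonzero, and each $q$-average datum is bounded by $\|f\|_\infty$ (since $\int B_{q-1}=1$), so its sup-norm is at most $(p_j+2)\|L_{p_j+q}\|_\infty$, with $\|L_{p_j+q}\|_\infty$ as in Definition \ref{def:normaL}. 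The univariate Theorem \ref{quasiintprop3}, applied in the $l$-th variable with the others held fixed, gives $\big\|(I-Q^{q}_{p_l,(l)})f\big\|_{\infty}=O(h^{p_l+1})$. Summing over $l$, the prefactors are $O(1)$ and the slowest-decaying term dominates, yielding the claimed $O\!\big(h^{\min_{1\le l\le k}\{p_l\}+1}\big)$ rate.

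The hard part will be the bookkeeping around the domains: as in the univariate case, applying $Q^{q}_{p_l,(l)}$ requires the $q$-average data on a slightly enlarged interval in the $l$-th coordinate, so at each stage of the composition I must check that the partially-applied function is still defined and $\mathcal{C}^{p_l+1}$ on the enlarged domain needed by the next factor, using the smooth polynomial extrapolation discussed after Theorem \ref{quasiintprop3} together with Whitney's extension theorem \cite{whitney} when necessary. Once the uniform boundedness of the intermediate operators and the correct nesting of enlarged domains are in place, the estimate follows mechanically from the univariate theorem and the triangle inequality.
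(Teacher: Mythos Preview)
The paper states this corollary without proof, presenting it as a summary of the preceding tensor-product constructions. Your argument is correct and supplies exactly the standard proof that the paper leaves implicit: the reproduction claim is indeed immediate from the preceding corollary together with the known reproduction property of $Q_{\mathbf{p}}$, and your telescoping decomposition combined with the uniform boundedness of each univariate factor and Theorem~\ref{quasiintprop3} is the canonical way to lift the univariate $O(h^{p_l+1})$ estimate to the tensor product. The factorisation $Q^q_{\mathbf{p}}=Q^{q}_{p_1,(1)}\circ\cdots\circ Q^{q}_{p_k,(k)}$ that you rely on does hold, since the multivariate $q$-average $f^q_{\mathbf{n}}=f\ast\omega_h^{\mathbf{q}}(\mathbf{n}h)$ factors as iterated one-dimensional averages and both $L_{\mathbf{p}+\mathbf{q}}$ and $B_{\mathbf{p}}$ are defined as tensor products; so there is nothing missing beyond the routine domain bookkeeping you already flag.
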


\section{Conclusions}

In this work, two main results are presented: We give an explicit method to obtain the values of a function from cell-average data without obtaining the coefficients of the polynomials, we prove the error bound and extend the result to several dimensions. In the second part of the paper, we introduce B-splines theory and a generalisation of the cell-average discretisation using $q$-average based on the repeat convolution of the characteristic function in the interval $[-\frac12,\frac12]$. We prove a relation between all the $q$-discretisations, offering a simple algorithm to switch between them. As a particular case, we give a formula to obtain an approximation of a function from cell-average data. In all these cases, proofs for the accuracy order are given and some examples are performed. Finally, we extend these results to any dimension using a tensor product.

\bibliographystyle{these}

\appendix

\section{From cell-average to point values: 2D case}\label{seccion2d}

In this appendix, we suppose that $m^{x},m^{y},p\in\mathbb{N}$ with $2m^\mu+2\leq p$, $\mu=x,y$; $x_1,x_2,y_1,y_2,h_x,h_y\in\mathbb{R}$ with $x_1<x_2$, $y_1<y_2$, $h_x,h_y>0$ and $f\in \mathcal{C}^p([x_1,x_2]\times[y_1,y_2])$ with $[a-m^xh_x,a+m^xh_x]\times [b-m^yh_y,b+m^yh_y] \subset [x_1,x_2]\times[y_1,y_2]$.
Also, we consider $\tilde\Omega=[a-\frac {h_x}{2},a+\frac{h_x}{2}]\times [b-\frac {h_y}{2},b+\frac{h_y}{2}]$ and
$$\bar f(a,b)= \frac{1}{h_xh_y}\int_{\tilde\Omega}f(x,y)d(x,y).$$
Since $f\in \mathcal{C}^p(\tilde\Omega)$ then:
$$\int_{\tilde \Omega}f(x,y)d(x,y)=\int_{a-\frac{h_x}{2}}^{a+\frac{h_x}{2}}\int_{b-\frac{h_y}{2}}^{b+\frac{h_y}{2}}f(x,y)dydx=\int_{b-\frac{h_y}{2}}^{b+\frac{h_y}{2}}\int_{a-\frac{h_x}{2}}^{a+\frac{h_x}{2}}f(x,y)dxdy.$$
We define the function:
$$F(x)=\frac{1}{h_y}\int_{b-\frac{h_y}{2}}^{b+\frac{h_y}{2}} f(x,y)dy.$$
By Leibnitz rule, $F\in \mathcal{C}^p([x_1,x_2])$, then following the analysis in Section \ref{generalcentro} we get:
\begin{equation}\label{eq71dgeneral}
\frac{1}{h_x}\int_{a-\frac{h_x}{2}}^{a+\frac{h_x}{2}} F(x)dx+\sum_{i=1}^{m^x}a_{i}\Delta^{2i}\bar{F}(a)= F(a)+O(h_x^{2m^x+1}).
\end{equation}
We denote
$$F(a)=\frac{1}{h_y}\int_{b-\frac{h_y}{2}}^{b+\frac{h_y}{2}} f(a,y)dy=\frac{1}{h_y}\int_{b-\frac{h_y}{2}}^{b+\frac{h_y}{2}} f_{a}(y)dy=\bar{f}_{a}(b),$$
noting that $f_{a} \in \mathcal{C}^p([y_1,y_2])$. Using the same formula for $\bar{f}_{a}(b)$, we get:
\begin{equation}\label{eq72dgeneral}
\bar{f}_{a}(b)+\sum_{j=1}^{m^y}a_{j}\Delta^{2j}\bar{f}_{a}(b)=f_{a}(b)+O(h_y^{2m^y+1})=f(a,b)+O(h_y^{2m^y+1}).
\end{equation}
Therefore, by Equations \eqref{eq71dgeneral} and \eqref{eq72dgeneral} we have that:
\begin{equation}\label{ecuacion1general}
\begin{split}
\bar f(a,b) +\sum_{i=1}^{m^x}a_{i}\Delta^{2i}\bar{F}(a)+ \sum_{j=1}^{m^y}a_{j}\Delta^{2j}\bar{f}_{a}(b)&=F(a)+\sum_{j=1}^{m^y}a_{j}\Delta^{2j}\bar{f}_{a}(b)+O(h_x^{2m^x+1})\\
&= f(a,b)+O(h_y^{2m^y+1})+O(h_x^{2m^x+1}).
\end{split}
\end{equation}
For $i\in\mathbb{N}$ we denote:
\begin{equation}\label{ecuacion2general}
\Delta^{2i}\bar{F}(a)=\Delta^{2i,x}\bar{f}(a,b).
\end{equation}
Subsequently, we know that for all $y\in[y_1,y_2]$ we have by Section \ref{sec1} that there exists $\xi \in[x_1,x_2]$ such that:
\begin{equation*}
f_a(y)=f(a,y)=\frac{1}{h_x}\int_{a-\frac{h_x}{2}}^{a+\frac{h_x}{2}}f(x,y)dx+\sum_{i=1}^{m^x}a_i \Delta^{2i}\bar{f}^y(a)+K\frac{\partial^{2m^x+2}f}{\partial x^{2m^x+2}}(\xi,y)h_x^{2m^x+2},
\end{equation*}
where $K$ is a constant independent on $y$; $f^y(x):=f(x,y)$ and
$$\bar{f}^y(a)=\frac{1}{h_x}\int_{a-\frac{h_x}{2}}^{a+\frac{h_x}{2}}f(x,y)dx,$$
Note that  $\frac{\partial^{2m^x+2}f}{\partial x^{2m^x+2}}(\xi,y)$ is continuous in $[y_1,y_2]$ since $f\in \mathcal{C}^p([x_1,x_2]\times[y_1,y_2])$ (so $\frac{\partial^{2m^x+2}f}{\partial x^{2m^x+2}}(\xi,y)\in\mathcal{L}^1([y_1,y_2])$).
Thus,
\begin{equation}
\begin{split}
\bar{f}_{a}(b)&=\frac{1}{h_y}\int_{b-\frac{h_y}{2}}^{b+\frac{h_y}{2}} f_{a}(y)dy \\
&= \frac{1}{h_y}\int_{b-\frac{h_y}{2}}^{b+\frac{h_y}{2}}
\left(\frac{1}{h_x}\int_{a-\frac{h_x}{2}}^{a+\frac{h_x}{2}}f(x,y)dx+\sum_{i=1}^{m^x}a_i \Delta^{2i}\bar{f}^y(a)+K\frac{\partial^{2m^x+2}f}{\partial x^{2m^x+2}}(\xi,y)h_x^{2m^x+2}\right)dy\\
&= \frac{1}{h_y}\int_{b-\frac{h_y}{2}}^{b+\frac{h_y}{2}}
\left(\frac{1}{h_x}\int_{a-\frac{h_x}{2}}^{a+\frac{h_x}{2}}f(x,y)dx+\sum_{i=1}^{m^x}a_i \Delta^{2i}\bar{f}^y(a)\right)dy+O(h_x^{2m^x+2})\\
&= \bar{f}(a,b)+\sum_{i=1}^{m^x}a_i \Delta^{2i,x}\bar{f}(a,b)+O(h_x^{2m^x+2}).
\end{split}
\end{equation}
Now we can write that,
\begin{equation}\label{ecuacion3general}
\Delta^{2j}\bar{f}_{a}(b)=\Delta^{2j,y}\bar{f}(a,b)+\sum_{i=1}^{m^x}a_i \Delta^{2j,y}\Delta^{2i,x}\bar{f}(a,b)+O(h_x^{2m^x+1}).
\end{equation}
Collecting Equations \eqref{ecuacion1general}, \eqref{ecuacion2general} and \eqref{ecuacion3general}, we get
\begin{equation}\label{ecuacion4general}
\begin{split}
f(a,b)=&\bar f(a,b) +\sum_{i=1}^{m^x}a_{i}\Delta^{2i}\bar{F}(a)+ \sum_{j=1}^{m^y}a_{j}\Delta^{2j}\bar{f}_{a}(b)+O(h_y^{2m^y+1})+O(h_x^{2m^x+1})\\
=&\bar{f}(a,b)+\sum_{i=1}^{m^x}a_{i}\Delta^{2i,x}\bar{f}(a,b)+\sum_{j=1}^{m^y}a_{j}\Delta^{j,y}\bar{f}(a,b)\\
&+\sum_{j=1}^{m^y}\sum_{i=1}^{m^x}a_{j}a_{i}\Delta^{2j,y}\Delta^{i,x}\bar{f}(a,b)+O(h_y^{2m^y+1})+O(h_x^{2m^x+1})\\
=&\sum_{j=0}^{m^y}\sum_{i=0}^{m^x}a_{j}a_{i}\Delta^{j,y}\Delta^{2i,x}\bar{f}(a,b)+O(h_y^{2m^y+1})+O(h_x^{2m^x+1}).
\end{split}
\end{equation}
with $a_0=1$.
We perform the following example in order to clarify the ideas proposed in this appendix.
\begin{example}
For example, if $m^x=m^y=1$ and $h=h_x=h_y$, we know by Section \ref{sec1} that $a_1=-1/24$, then:
\begin{equation}\label{ecuacion5}
\begin{split}
f(a,b)+O(h^{4})=&\bar{f}(a,b)-\frac{1}{24}(\Delta^{2,x}\bar{f}(a,b)+\Delta^{2,y}\bar{f}(a,b))+\frac{1}{576}\Delta^{2,y}\Delta^{2,x}\bar{f}(a,b)\\
=&\bar{f}(a,b)-\frac{1}{24}(\Delta^{2,x}\bar{f}(a,b)+\Delta^{2,y}\bar{f}(a,b))+\\
&+\frac{1}{576}(\bar{f}(a-h,b-h)-2\bar{f}(a-h,b)+\bar{f}(a-h,b+h))+\\
&+\frac{1}{576}(-2\bar{f}(a,b-h)+4\bar{f}(a,b)-2\bar{f}(a,b+h))+\\
&+\frac{1}{576}(\bar{f}(a+h,b-h)-2\bar{f}(a+h,b)+\bar{f}(a+h,b+h)).\\
\end{split}
\end{equation}
\end{example}

Again, the next result gives an exact formula for polynomials.

\begin{corollary}
Let $m^x,m^y\in\mathbb{N}$ and $P\in\Pi^{(2m^x+1,2m^y+1)}_2(\mathbb{R})$, then
$$P(a,b)=\sum_{j=0}^{m^y}\sum_{i=0}^{m^x}a_{j}a_{i}\Delta^{2j,y}\Delta^{2i,x}\bar{P}(a,b),\,\,\, \forall  \,(a,b)\in\mathbb{R}^2,$$
being $a_0=1$ and $\mathbf{a}_{m}=(a_i)_{i=1}^{m}$ the solution of the system \eqref{sys}.
\end{corollary}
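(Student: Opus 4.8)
The plan is to deduce the bivariate identity from the univariate Corollary \ref{papolinomios} by exploiting the tensor-product structure of both the polynomial space $\Pi^{(2m^x+1,2m^y+1)}_2(\mathbb{R})$ and the difference operators $\Delta^{2i,x}$, $\Delta^{2j,y}$. Since both sides of the claimed identity are linear in $P$, and $\Pi^{(2m^x+1,2m^y+1)}_2(\mathbb{R})$ is spanned by products $P(x,y)=u(x)\,v(y)$ with $u\in\Pi^{2m^x+1}_1(\mathbb{R})$ and $v\in\Pi^{2m^y+1}_1(\mathbb{R})$, it suffices to establish the identity for a single such product and then extend by linearity.

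For a product $P(x,y)=u(x)\,v(y)$, Fubini's theorem factors the bivariate cell-average as $\bar P(a,b)=\bar u(a)\,\bar v(b)$, where $\bar u$ and $\bar v$ are the univariate cell-averages. Because $\Delta^{2i,x}$ acts only on the first argument while $\Delta^{2j,y}$ acts only on the second, and the two operators commute, I would compute $\Delta^{2j,y}\Delta^{2i,x}\bar P(a,b)=\big(\Delta^{2i}\bar u(a)\big)\big(\Delta^{2j}\bar v(b)\big)$. Substituting into the double sum and separating the two index ranges gives
$$\sum_{j=0}^{m^y}\sum_{i=0}^{m^x}a_{j}a_{i}\,\Delta^{2j,y}\Delta^{2i,x}\bar P(a,b)=\left(\sum_{i=0}^{m^x}a_{i}\,\Delta^{2i}\bar u(a)\right)\left(\sum_{j=0}^{m^y}a_{j}\,\Delta^{2j}\bar v(b)\right).$$

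At this point the univariate result closes the argument: by Corollary \ref{papolinomios} applied with $m=m^x$ the first factor equals $u(a)$ (since $u\in\Pi^{2m^x+1}_1(\mathbb{R})$), and applied with $m=m^y$ the second factor equals $v(b)$, so the right-hand side is $u(a)\,v(b)=P(a,b)$. Linearity then yields the claim for an arbitrary $P\in\Pi^{(2m^x+1,2m^y+1)}_2(\mathbb{R})$.

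The one point requiring care, and the main subtlety, is that the statement uses a single coefficient sequence $\mathbf{a}_m$ for both coordinate directions, whereas the two univariate invocations use truncation levels $m^x$ and $m^y$ that may differ. This is legitimate because $\mathbf{M}_m$ is lower triangular with unit diagonal, as established earlier ($m_{i,i}=1$ and $m_{i,j}=0$ for $i<j$), so the recursion $a_r=b_r-\sum_{s=1}^{r-1}m_{r,s}a_s$ determines each $a_r$ independently of the truncation level. Hence the coefficients $a_1,\dots,a_{\max(m^x,m^y)}$ form one consistent sequence, and the two applications of Corollary \ref{papolinomios} draw from the same $\{a_r\}$.
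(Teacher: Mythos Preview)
Your argument is correct, but it follows a genuinely different route from the paper's. The paper does not reduce to separable products $u(x)v(y)$; instead it replays, for an arbitrary $P\in\Pi^{(2m^x+1,2m^y+1)}_2(\mathbb{R})$, the very construction that produced the 2D formula in the first place. It observes that $F(x)=\frac{1}{h_y}\int_{b-h_y/2}^{b+h_y/2}P(x,y)\,dy$ lies in $\Pi^{2m^x+1}_1(\mathbb{R})$ and that $P_a(y)=P(a,y)$ lies in $\Pi^{2m^y+1}_1(\mathbb{R})$, so the two univariate error terms in the derivation vanish identically; it then rewrites $\Delta^{2j}\bar P_a(b)$ in terms of the 2D cell-average differences $\Delta^{2j,y}\Delta^{2i,x}\bar P(a,b)$ exactly as in the approximate case, again with zero remainder. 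Your approach is shorter and more transparent because it exploits the tensor-product structure directly and avoids retracing the multi-step derivation; the paper's approach, on the other hand, works uniformly for every $P$ without invoking a basis and makes explicit that the corollary is simply the general 2D formula with the $O(h^{2m+2})$ terms set to zero. Your remark about the consistency of the coefficient sequence $\{a_r\}$ across different truncation levels is a valid and useful clarification that the paper leaves implicit.
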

\begin{proof}
In order to prove the corollary we introduce two considerations:
\begin{itemize}
\item As $P(x,y) \in \Pi^{2m^x+1,2m^y+1}_2(\mathbb{R})$, then the function
$$F(x)=\frac{1}{h_x}\int_{b-\frac{h_y}{2}}^{b+\frac{{h_y}}{2}} P(x,y)dy,$$
is a polynomial of degree $2m^x+1$ on the variable $x$, thus
\begin{equation}\label{paracor21}
\frac{\partial^{2m^x+2} F}{\partial x^{2m^x+2}}(x)=0,\,\, \forall x\,\in \mathbb{R}.
\end{equation}
\item  For all $\mu \in \mathbb{R}$, the function defined as $P_{\mu}(y)=P(\mu,y)$ is a polynomial of degree $2m^y+1$ on the variable $y$, thus
    \begin{equation}\label{paracor22}
\frac{\partial^{2m^y+2}P_{\mu}}{\partial y^{2m^y+2}}(y)=0, \,\, \forall y \,\in \mathbb{R}.
\end{equation}
\end{itemize}
With these considerations, the proof is similar to the construction of the formula \eqref{ecuacion4general} since:

By Section \ref{sec1} and Equation \eqref{paracor21} we have that the formula for the 1d case is exact, i.e.
\begin{equation}\label{eqcor1}
\frac{1}{h_x}\int_{a-\frac{h_x}{2}}^{a+\frac{h_x}{2}} F(x)dx+\sum_{i=1}^{m^x}a_{i}\Delta^{2i}\bar{F}(a)= F(a),
\end{equation}
being $\mathbf{a}_{m^x}=(a_i)_{i=1}^{m^x}$ the solution of the system in \eqref{sys}.
Again, using the same formula for $\bar{p}_a(b)$ and Equation \eqref{paracor22}:
\begin{equation}\label{eqcor2}
\bar{P}_a(b)+\sum_{j=1}^{m^y}a_{j}\Delta^{2j}\bar{P}_a(b)=P_a(b)=P(a,b).
\end{equation}
Thus, by Equations \eqref{eqcor1} and \eqref{eqcor2} we have that:
\begin{equation}\label{eqcor3}
\begin{split}
\bar P(a,b) +\sum_{i=1}^{m^x}a_{i}\Delta^{2i}\bar{F}(a)+ \sum_{j=1}^{m^y}a_{j}\Delta^{2j}\bar{P}_a(b)&=F(a)+\sum_{j=1}^{m^y}a_{j}\Delta^{2j}\bar{P}_a(b)= P(a,b).
\end{split}
\end{equation}
By $P(x,y)\in \Pi^{2m^x+1,2m^y+1}_2(\mathbb{R})$, we have for each $y\in \mathbb{R}$
\begin{equation*}
P_a(y)=P(a,y)=\frac{1}{h}\int_{a-\frac{h_x}{2}}^{a+\frac{h_x}{2}}P(x,y)dx+\sum_{i=1}^{m}a_i \Delta^{2i}\bar{P}^y(a).
\end{equation*}
Thus,
\begin{equation}
\begin{split}
\bar{P}_a(b)&=\frac{1}{h_y}\int_{b-\frac{h_y}{2}}^{b+\frac{h_y}{2}} P_a(y)dy = \frac{1}{h_y}\int_{b-\frac{h_y}{2}}^{b+\frac{h_y}{2}}
\left(\frac{1}{h_x}\int_{a-\frac{h_x}{2}}^{a+\frac{h_x}{2}}P(x,y)dx+\sum_{i=1}^{m^x}a_i \Delta^{2i}\bar{P}^y(a)\right)dy\\
&= \bar{P}(a,b)+\sum_{i=1}^{m^x}a_i \Delta^{2i,x}\bar{P}(a,b).
\end{split}
\end{equation}
Then
\begin{equation}\label{eqcor3}
\Delta^{2j}\bar{P}_a(b)=\Delta^{2j,y}\bar{P}(a,b)+\sum_{i=1}^{m^x}a_i \Delta^{2j,y}\Delta^{2i,x}\bar{P}(a,b).
\end{equation}
Finally, by Equations \eqref{eqcor1}, \eqref{eqcor2} and \eqref{eqcor3}, we obtain:
\begin{equation}\label{eqcor4}
\begin{split}
P(a,b)&=\bar P(a,b) +\sum_{i=1}^{m^x}a_{i}\Delta^{2i}\bar{F}(a)+ \sum_{j=1}^{m^y}a_{j}\Delta^{2j}\bar{P}_a(b)\\
&=\bar{P}(a,b)+\sum_{i=1}^{m^x}a_{i}\Delta^{2i,x}\bar{P}(a,b)+\sum_{j=1}^{m^y}a_{j}\Delta^{2j,y}\bar{P}(a,b)+\sum_{j=1}^{m^y}\sum_{i=1}^{m^x}a_{j}a_{i}\Delta^{2j,y}\Delta^{2i,x}\bar{P}(a,b)\\
&=\sum_{j=0}^{m^y}\sum_{i=0}^{m^x}a_{j}a_{i}\Delta^{2j,y}\Delta^{2i,x}\bar{P}(a,b),
\end{split}
\end{equation}
with $a_0=1$.
\end{proof}
Finally, we show a new example with different values $m^x$ and $m^y$.
\begin{example}
For example, if $m^x=2$, $m^y=1$ and $h=h_x=h_y$, we know by Section \ref{sec1} that $a_1=-1/24,\, a_2=3/640$, then:
\begin{equation}\label{ecuacion6}
\begin{split}
f(a,b)+O(\tilde{h}^{4})+O(h^6)=&\bar{f}(a,b)-\frac{1}{24}(\Delta^{2,x}\bar{f}(a,b)+\Delta^{2,y}\bar{f}(a,b))+\frac{3}{640}\Delta^{4,x}\bar{f}(a,b)\\
&+\frac{1}{576}\Delta^{2,y}\Delta^{2,x}\bar{f}(a,b)-\frac{1}{5120}\Delta^{2,y}\Delta^{4,x}\bar{f}(a,b).\\
\end{split}
\end{equation}
\end{example}

\end{document}